\numberwithin{equation}{section} 
\newcommand{\R}{\ensuremath{\mathbb{R}}}
\newcommand{\N}{\ensuremath{\mathbb{N}}}
\newcommand{\K}{\mathbb{K}}
\newcommand{\cC}{\mathcal{C}}
\newcommand{\cF}{\mathcal{F}}
\newcommand{\bP}{\mathbb{P}}
\newcommand{\mP}{\mathbb{P}}
\newcommand{\X}{\mathbb{X}}
\newcommand{\E}{\mathbb{E}}
\newcommand{\cD}{\mathcal{D}}
\newcommand{\cL}{\mathcal{L}}
\newcommand{\bx}{\mathbf x}
\newcommand{\bX}{\mathbf X}
\newcommand{\cB}{\mathcal{B}}
\newcommand{\ty}{\bar y}
\newcommand{\tp}{p{\rm -var}}
\newcommand{\tq}{q{\rm -var}}
\newcommand{\ltn}{\ensuremath{\left| \! \left| \! \left|}}
\newcommand{\rtn}{\ensuremath{\right| \! \right| \! \right|}}
\newtheorem{theorem}{Theorem}[section]
{ \theorembodyfont{\normalfont} 
	
	\newtheorem{remark}[theorem]{Remark}
}
\newtheorem{lemma}[theorem]{Lemma}
\newtheorem{corollary}[theorem]{Corollary}
\newtheorem{proposition}[theorem]{Proposition}
\newcounter{enumctr}
\begin{document}
\title{Asymptotic stability of controlled differential equations. \\Part II: rough integrals}

\author{Luu Hoang Duc \thanks{Max-Planck-Institute for Mathematics in the Sciences, Leipzig, Germany,
		\& Institute of Mathematics, Viet Nam Academy of Science and Technology	{\tt\small duc.luu@mis.mpg.de, lhduc@math.ac.vn}
	}
}
\date{}
\maketitle

\begin{abstract}
We continue the approach in Part I \cite{duchong19} to study stationary states of controlled differential equations driven by rough paths, using the framework of random dynamical systems and random attractors. Part II deals with driving paths of finite $\nu$ - H\"older norms with $\nu \in (\frac{1}{3},\frac{1}{2})$ so that the integrals are interpreted in the Gubinelli sense for controlled rough paths. We prove sufficient conditions for the attractor to be a singleton, thus the pathwise convergence is in both pullback and forward senses.
\end{abstract}

{\bf Keywords:}
stochastic differential equations (SDE), rough path theory, rough differential equations, random dynamical systems, random attractors, exponential stability.


\section{Introduction}

This paper studies the asymptotic behavior of the rough differential equation
\begin{equation}\label{fSDE0}
dy_t = [Ay_t + f(y_t)]dt + g(y_t)d x_t,t\in \R_+,\ y(0)=y_0 \in \R^d, 
\end{equation}
where we assume for simplicity that $A \in \R^{d\times d}$, $f: \R^d \to \R^d, g: \R^d \to \R^{d\times m}$ are globally Lipschitz continuous functions with enough regularity, and  the driving path $x \in C^{\nu{\rm -Hol}}(\R,\R^m) \subset C^{p{\rm - var}}(\R, \R^m)$, with $\frac{1}{3}< \nu <\frac{1}{2}, p >\frac{1}{\nu}$ for simplicity. Such system is understood in the pathwise sense of a stochastic differential equation driven by a H\"older continuous stochastic process. 
In this circumstance, equation \eqref{fSDE0} is often solved using rough path theory, where the solution is understood in the sense of either Lyons \cite{lyons98}, \cite{lyonsetal07}, or of Friz-Victoir \cite{friz}, \cite{BRSch17}. However, since such definitions of rough differential equations need not to specify what a rough integral is, it is not clear how to apply the semigroup technique, which is well developed in \cite{ducGANSch18} and \cite{duchong19} for Young differential equations \cite{young}, to estimate rough integrals. In this paper, we would like to approach system \eqref{fSDE0} by considering the second integral as a rough integral for controlled rough paths in the sense of Gubinelli \cite{gubinelli}.  
System \eqref{fSDE0} can then be proved \cite{duc20} to admit a unique path-wise solution given initial conditions. 

Our aim is to investigate the role of the driving noise in the longterm behavior of rough system \eqref{fSDE0} as described in details in Part I \cite{duchong19} for Young integrals. Our main result in this paper is to prove that under strong dissipativity condition, system \eqref{fSDE0} also possesses a random attractor which would be a singleton provided that the Lipschitz coefficient $C_g$ in \eqref{gcond} is small enough. 
\subsubsection*{Assumptions and main results}
We impose the following assumptions for the coefficient functions and the driving path.\\

(${\textbf H}_1$) $A \in \R^{d\times d}$ is a matrix which has all eigenvalues of negative real parts;

(${\textbf H}_2$) $f: \R^d \to \R^d$ is globally Lipschitz continuous with the Lipschitz constant $C_f$. $g$ either belongs to $C^3_b$ such that 
\begin{equation}\label{gcond}
\|g\|_\infty, C_g := \max\Big\{\|Dg\|_\infty, \|D_g^2\|_\infty, \|D^3_g\|_\infty \Big\} < \infty,
\end{equation}
or has a linear form $g(y) = C y + g(0)$, where $C \in \R^d \otimes \R^{d\times m}$;

(${\textbf H}_3$) for a given $p \in (1,2)$, $x$ belongs to the space $\cC^{p{\rm - var}}(\R, \R^m)$ of all continuous paths which is of finite $p-$variation on any interval $[a,b]$. In particular, $x$ is a realization of a stationary stochastic process $Z_t(\omega)$, such that $x$ can be lifted into a realized component $\bx = (x,\X)$ of a stationary stochastic process $(x_\cdot(\omega),\X_{\cdot,\cdot}(\omega))$, such that the estimate
\[
E \Big(\|x_{s,t} \|^p +\|\X_{s,t}\|^{q}\Big)\leq C_{T,\nu} |t-s|^{p \nu },\forall s,t \in [0,T]
\]
holds for any $[0,T]$ for some constant $C_{T,\nu}$. As such $x \in C^{p{\rm - var}}(\R, \R^m)$, such that 
\begin{equation}\label{Gamma}
\Gamma(p):=\Big(E \ltn Z \rtn^p_{p{\rm -var},[-1,1]}\Big)^{\frac{1}{p}} < \infty. 
\end{equation}

Such a stochastic process, in particular, can be a fractional Brownian motion $B^H$  \cite{mandelbrot} with Hurst exponent $H \in (\frac{1}{3},1)$, i.e. a family of centered Gaussian processes $B^H = \{B^H_t\}_{t\in \R}$ with continuous sample paths and 
\[
E \|B^H_t- B^H_s\| = |t-s|^{2H}, \forall t,s \in \R.
\] 
Assumption (${\textbf H}_1$) ensures that there exist constant $C_A\geq 1,\lambda_A >0$ such that 
	\begin{eqnarray}
	\|\Phi\|_{\infty,[a,b]} &\leq& C_Ae^{-\lambda_A a}, \label{estphi1}\\
	\ltn \Phi\rtn_{p{\rm -var},[a,b]} &\leq& \|A\|C_A e^{-\lambda_A a}(b-a),\quad \forall\;  0\leq a<b, \label{estphi2}
	\end{eqnarray}
	where $\Phi(t) =e^{At}$ is the semigroup generated by $A$ \cite[Proposition 2.2]{duchong19}. Our main result can be formulated as follows.

\begin{theorem}\label{mainthm}
	Assume that the system \eqref{stochYDE} satisfies the assumptions ${\textbf H}_1-{\textbf H}_3$, and further that  $\lambda_A > C_fC_A$, where $\lambda_A$ and $C_A$ are given from \eqref{estphi1},\eqref{estphi2}. If
	\begin{equation}\label{criterion0}
	\lambda_A - C_A C_f> \frac{1}{2}C_A e^{\lambda_A+4(\|A\|+C_f)} \Big\{\Big[4C_pC_g\Gamma(p)\Big]^p + \Big[4C_pC_g\Gamma(p)\Big]\Big\},
	\end{equation}
	where $\Gamma(p)$ is defined in \eqref{Gamma} and $C_p$ in \eqref{roughpvar}, then the generated random dynamical system $\varphi$ of \eqref{stochYDE} possesses a pullback attractor $\mathcal{A}(x)$. Moreover, this attractor is a singleton, i.e. $\mathcal{A}(x) = \{a(x)\}$ a.s., in case $g(y) = Cy +g(0)$ is a linear map satisfying \eqref{criterion0} or in case $g \in C^2_b$ with the Lipschitz constant $C_g$ small enough, thus the pathwise convergence is in both the pullback and forward directions.	
\end{theorem}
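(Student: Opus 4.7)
My plan is to follow the semigroup / variation-of-constants strategy developed in Part~I for Young integrals, but adapted to controlled rough paths in the Gubinelli sense. The scheme breaks into four phases: (a) a priori estimates of the solution on unit intervals using the mild formulation; (b) construction of a random absorbing set from those estimates; (c) existence of the pullback attractor $\mathcal{A}(x)$; (d) singleton property under linearity or smallness of $C_g$.

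First I would rewrite \eqref{fSDE0} in mild form
\[
y_t \;=\; \Phi(t-s)\,y_s \;+\; \int_s^t \Phi(t-u)f(y_u)\,du \;+\; \int_s^t \Phi(t-u)g(y_u)\,dx_u,
\]
where the last integral is interpreted as a Gubinelli rough integral against $\bx=(x,\X)$: the integrand $u\mapsto \Phi(t-u)g(y_u)$ is a controlled rough path with Gubinelli derivative $\Phi(t-u)Dg(y_u)g(y_u)$. Applying the sewing lemma on an interval $[k,k+1]$, I would bound $\ltn y\rtn_{\tp,[k,k+1]}$ in terms of $\|y_k\|$, $\ltn x\rtn_{\tp,[k,k+1]}$, $\ltn \X\rtn_{\tq,[k,k+1]}$, and the norms of $f,g,\Phi$. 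Crucially, inserting the factor $e^{-\lambda_A(t-s)}$ from \eqref{estphi1}--\eqref{estphi2} at the front of each term gives a genuine contraction once $\lambda_A>C_AC_f$; this is where the Lipschitz contribution of $f$ is absorbed. The rough integral on top contributes a term of the shape $C_AC_pC_g\bigl(\ltn x\rtn_{\tp}+\ltn x\rtn_{\tp}^p\bigr)$ (the linear piece coming from the first-order controlled term and the $p$-power from the sewing remainder) — this is exactly the structure visible on the right-hand side of \eqref{criterion0}.

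Combining the mild inequality with a discrete Gronwall-type argument on $k\in\N$ would yield a bound of the form $\|y_{k+1}\|\le\rho_k\|y_k\|+\eta_k$, where $\rho_k,\eta_k$ are functionals of $\bx$ restricted to $[k,k+1]$. Condition \eqref{criterion0} ensures $E\log\rho_k<0$ via the definition of $\Gamma(p)$ in \eqref{Gamma}; Birkhoff's ergodic theorem, applied to the stationary stochastic process driving $x$, then produces a tempered random radius $R(\omega)$ such that the ball of radius $R(\omega)$ is pullback absorbing. Standard random dynamical system theory (as invoked in Part~I) then delivers the pullback attractor $\mathcal{A}(x)$ as the $\omega$-limit of this absorbing ball.

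For the singleton statement I would compare two solutions $y,z$ and study $\Delta=y-z$. In the linear case $g(y)=Cy+g(0)$, $\Delta$ satisfies a homogeneous rough equation of exactly the same structural type, driven by $\bx$ but with no constant forcing, so the same unit-interval contraction argument gives $\|\Delta_t\|\to 0$ pullback a.s., forcing $\mathcal{A}(x)$ to reduce to a point. In the nonlinear $C^2_b$ case with $C_g$ small, I would directly estimate $\ltn \Delta\rtn_{\tp,[k,k+1]}$: the Lipschitz bounds on $f,g$ (and $Dg$) give a linear inequality $\|\Delta_{k+1}\|\le\tilde\rho_k\|\Delta_k\|$ where $\tilde\rho_k$ again has $E\log\tilde\rho_k<0$ once $C_g$ is small enough, yielding pullback (and, by stationarity, forward) convergence of trajectories to a single random point $a(x)$.

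The main obstacle I anticipate is the second step: controlling the rough integral $\int_s^t \Phi(t-u)g(y_u)\,dx_u$ uniformly in the length of the interval via the controlled-rough-path framework. Unlike the Young case, one must simultaneously track the Gubinelli derivative and the $p/2$-variation remainder, and the latter depends on $\|D^2 g\|_\infty$ and on $\ltn y\rtn_{\tp}$ itself, which is precisely the quantity we are trying to bound — so the argument is implicit and requires a carefully chosen greedy/stopping partition of $[k,k+1]$ to linearise the estimate. This nonlinear self-reference, combined with the need to retain the exponential factor from $\Phi$ in every sewing step, is what makes both the quantitative constant in \eqref{criterion0} and the restriction to small $C_g$ in the nonlinear singleton case unavoidable.
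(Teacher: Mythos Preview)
Your phases (a)--(c) and the linear singleton argument are essentially what the paper does: mild formulation via the variation-of-constants formula, a rough-integral estimate of $\int_a^b\Phi(c-s)g(y_s)\,dx_s$ (Proposition~\ref{YDEg}), greedy stopping times to close the implicit self-reference, the discrete Gronwall Lemma~\ref{gronwall}, and Birkhoff to turn the criterion \eqref{criterion0} into a genuine contraction. For $g(y)=Cy+g(0)$ the difference $\Delta$ indeed satisfies an equation of the same structural type with no forcing, and the paper's Theorem~\ref{linear} proceeds exactly as you describe.

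There is, however, a real gap in your treatment of the nonlinear $C^3_b$ case. You claim that the Lipschitz bounds on $f,g,Dg$ yield a linear inequality $\|\Delta_{k+1}\|\le \tilde\rho_k\|\Delta_k\|$ with $\tilde\rho_k$ a functional of the noise alone. This is false in the rough regime: the controlled-path remainder of $g(\tilde y)-g(y)$ contains terms such as $\|z\|_\infty\,\ltn R^y\rtn_{\tq}$ and $\|z\|_\infty\,\ltn y\rtn_{\tp}$ (see Proposition~\ref{RDEgdiff} and the factor $\Lambda(\bx,[a,b])$ in \eqref{Lambda}, \eqref{roughest4}). Consequently $\tilde\rho_k$ depends polynomially on $\|y_k\|\vee\|\tilde y_k\|$, not just on $\ltn\bx\rtn_{\tp,\Delta_k}$. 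Applying Birkhoff to $\log(1+\tilde\rho_k)$ therefore requires \emph{integrability} of (a power of) the absorbing radius $\hat b(\bx)$, which temperedness alone does not give.

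The paper closes this gap by an additional argument (Theorem~\ref{gbounded}, Steps~1--2) that is absent from your plan: one compares $y_t$ with the deterministic flow $\mu_t$ solving $\dot\mu=A\mu+f(\mu)$ from the same initial point, shows that $h_t=y_t-\mu_t$ satisfies $\|h_r\|\le\xi(\bx)(1+\|y_0\|^\beta)$ with $\beta=2/p<1$, and then uses Young's inequality to obtain a dissipativity estimate $\|y_r\|^{2p}\le\eta\|y_0\|^{2p}+\xi_r(\bx)$ with $\eta<1$ and $\xi_r$ \emph{integrable}. Iterating this gives an absorbing radius $\hat b(\bx)$ with $\E\,\hat b(\bx)^{2p}<\infty$, which is precisely what is needed to make $\log\big(1+I(\bx,[0,1])\big)$ integrable in \eqref{eqI} and to run the ergodic argument. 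Without this step your contraction factor is not known to have finite expectation of its logarithm, and the singleton conclusion does not follow.
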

The paper is organized as follows. Section 2 is devoted to present the existence, uniqueness and the norm estimates of the solution of rough system \eqref{fSDE0}. In subsection 3.1, we introduce the generation of random dynamical system by the equation \eqref{fSDE0}. Using Lemma \ref{ytest}, we prove the existence of a global random pullback attractor in Theorem \ref{attractor}. We also prove in Theorem \ref{linear} for case $g(y) = Cy$ and Theorem \ref{gbounded} for the case $C_g \in C^3_b$ that the attractor is a random singleton as long as $C_g \leq \delta$ is small enough. 

\section{Rough integrals}
Let us present in this preparation section a short summary on rough path theory and rough integrals. Given any compact time interval $I \subset \R$, let $C(I,\R^d)$ denote the space of all continuous paths $y:\;I \to \R^d$ equipped with sup norm $\|\cdot\|_{\infty,I}$ given by $\|y\|_{\infty,I}=\sup_{t\in I} \|y_t\|$, where $\|\cdot\|$ is the Euclidean norm in $\R^d$. We write $y_{s,t}:= y_t-y_s$. For $p\geq 1$, denote by $\cC^{p{\rm-var}}(I,\R^d)\subset C(I,\R^d)$ the space of all continuous path $y:I \to \R^d$ which is of finite $p$-variation 
\begin{eqnarray}
\ltn y\rtn_{p\text{-var},I} :=\left(\sup_{\Pi(I)}\sum_{i=1}^n \|y_{t_i,t_{i+1}}\|^p\right)^{1/p} < \infty,
\end{eqnarray}
where the supremum is taken over the whole class of finite partition of $I$. $\cC^{p{\rm-var}}(I,\R^d)$ equipped with the $p-$var norm
\begin{eqnarray*}
	\|y\|_{p\text{-var},I}&:=& \|y_{\min{I}}\|+\ltn y\rtn_{p\rm{-var},I},
\end{eqnarray*}
is a nonseparable Banach space \cite[Theorem 5.25, p.\ 92]{friz}. Also for each $0<\alpha<1$, we denote by $C^{\alpha}(I,\R^d)$ the space of H\"older continuous functions with exponent $\alpha$ on $I$ equipped with the norm
\[
\|y\|_{\alpha,I}: = \|y_{\min{I}}\| + \ltn y\rtn_{\alpha,I}=\|y(a)\| + \sup_{s<t\in I }\frac{\|y_{s,t}\|}{(t-s)^\alpha},
\]
A continuous map $\overline{\omega}: \Delta^2(I)\longrightarrow \R^+, \Delta^2(I):=\{(s,t): \min{I}\leq s\leq t\leq \max{I}\}$ is called a {\it control} if it is zero on the diagonal and superadditive, i.e.  $\overline{\omega}_{t,t}=0$ for all $t\in I$, and  $\overline{\omega}_{s,u}+\overline{\omega}_{u,t}\leq \overline{\omega}_{s,t}$ for all $s\leq u\leq t$ in $I$. 

We also introduce the construction of the integral using rough paths for the case $y,x \in C^\alpha(I)$ when $\alpha\in(\frac{1}{3},\nu)$. To do that, we need to introduce the concept of rough paths. Following \cite{frizhairer}, a couple $\bx=(x,\X)$, with $x \in C^\alpha(I,\R^m)$ and $\X \in C^{2\alpha}_2(\Delta^2(I),\R^m \otimes \R^m):= \{\X: \sup_{s<t} \frac{\|\X_{s,t}\|}{|t-s|^{2\alpha}} < \infty \}$ where the tensor product $\R^m \otimes \R^n$ can be indentified with the matrix space $\R^{m\times n}$, is called a {\it rough path} if it satisfies Chen's relation
\begin{equation}\label{chen}
\X_{s,t} - \X_{s,u} - \X_{u,t} = x_{s,u} \otimes x_{u,t},\qquad \forall \min{I} \leq s \leq u \leq t \leq \max{I}. 
\end{equation}
$\X$ is viewed as {\it postulating} the value of the quantity $\int_s^t x_{s,r} \otimes dx_r := \X_{s,t}$ where the right hand side is taken as a definition for the left hand side. Denote by $\cC^\alpha(I) \subset C^\alpha \oplus C^{2\alpha}_2$ the set of all rough paths in $I$, then $\cC^\alpha$ is a closed set but not a linear space, equipped with the rough path semi-norm 
\begin{equation}\label{translated}
\ltn \bx \rtn_{\alpha,I} := \ltn x \rtn_{\alpha,I} + \ltn \X \rtn_{2\alpha,\Delta^2(I)}^{\frac{1}{2}} < \infty.  
\end{equation}

Given fixed $\nu \in (\frac{1}{3},\frac{1}{2}), \alpha \in (\frac{1}{3},\nu)$ and  $p \in (\frac{1}{\alpha},3)$, on each compact interval $I$ such that $|I|=\max{I} - \min{I} \leq 1$, we also consider in this paper the rough path $\bx = (x,\X)$ with the $\tp$ norm 
\begin{equation}\label{pvarnorm}
\ltn \bx \rtn_{\tp,I} := \Big(\ltn x \rtn^p_{\tp,I} + \ltn \X \rtn_{\tq,I}^q\Big)^{\frac{1}{p}}, \quad \text{where\ }  q=\frac{p}{2}.  
\end{equation}

In the stochastic scenarios, it is often assumed \cite{frizhairer} that the driving path $x \in C^{\nu{\rm -Hol}}(I,\R^m)\subset C^{p{\rm - var}}(I, \R^m)$ can be lifted into a realized component $\bx = (x,\X)$ of a stationary stochastic process $(x_\cdot(\omega),\X_{\cdot,\cdot}(\omega))$, such that the estimate
\[
E \Big(\|x_{s,t} \|^p +\|\X_{s,t}\|^{q}\Big)\leq C_{T,\nu} |t-s|^{p \nu },\forall s,t \in [0,T]
\]
holds for any $[0,T]$ for some constant $C_{T,\nu}$. 

\subsection{Controlled rough paths}

Following \cite{gubinelli}, a path $y \in C^\alpha(I,\cL(\R^m,\R^d))$ is then called to be {\it controlled by} $x \in C^\alpha(I,\R^m)$ if there exists a tube $(y^\prime,R^y)$ with $y^\prime \in C^\alpha(I,\cL(\R^m,\cL(\R^m,\R^d))), R^y \in C^{2\alpha}(\Delta^2(I),\cL(\R^m,\R^d))$ such that
\begin{equation}\label{controlRP}
y_{s,t} = y^\prime_s \otimes x_{s,t} + R^y_{s,t},\qquad \forall \min{I}\leq s \leq t \leq \max{I}.
\end{equation}
$y^\prime$ is called Gubinelli derivative of $y$, which is uniquely defined as long as $x \in C^\alpha\setminus C^{2\alpha}$ (see \cite[Proposition 6.4]{frizhairer}). The space $\cD^{2\alpha}_x(I)$ of all the couple $(y,y^\prime)$ that is controlled by $x$ will be a Banach space equipped with the norm
\begin{eqnarray*}
	\|y,y^\prime\|_{x,2\alpha,I} &:=& \|y_{\min{I}}\| + \|y^\prime_{\min{I}}\| + \ltn y,y^\prime \rtn_{x,2\alpha,I},\qquad \text{where} \\
	\ltn y,y^\prime \rtn_{x,2\alpha,I} &:=& \ltn y^\prime \rtn_{\alpha,I} +   \ltn R^y\rtn_{2\alpha,I},
\end{eqnarray*}
where we omit the value space for simplicity of presentation. Now fix a rough path $(x,\X)$, then for any $(y,y^\prime) \in \cD^{2\alpha}_x (I)$, it can be proved that the function $F \in C^\alpha(\Delta^2 (I),\R^d)$ defined by 
\[
F_{s,t} := y_s \otimes x_{s,t} + y^\prime_s \otimes \X_{s,t}, \quad \forall s,t \in I, s \leq t,
\] 
satisfies
\[
F_{s,t} - F_{s,u}-F_{u,t} = - R^y_{s,u} \otimes x_{u,t} - y^\prime_{s,u} \otimes \X_{u,t}, \quad \forall s\leq u \leq t;
\]
hence it belongs to the space 
\begin{eqnarray*}
	C^{\alpha, 3\alpha}_2(I) &:=& \Big \{ F\in C^\alpha(\Delta^2(I)): F_{t,t} =0 \quad \text{and}\\ && \quad \qquad \qquad \qquad \qquad \ltn \delta F \rtn_{3\alpha,I} := \sup_{\min{I} \leq s \leq u \leq t \leq \max{I}} \frac{\|F_{s,t} - F_{s,u}-F_{u,t}\|}{|t-s|^{3\alpha}} < \infty \Big\}.
\end{eqnarray*}
Thanks to the sewing lemma (see e.g. \cite{gubinelli}, \cite[Lemma 4.2]{frizhairer}), the integral $\int_s^t y_u dx_u$ can be defined as  
\[
\int_s^t y_u dx_u := \lim \limits_{|\Pi| \to 0} \sum_{[u,v] \in \Pi} [ y_{u} \otimes x_{u,v} + y^\prime_u \otimes \X_{u,v} ]
\]
where the limit is taken on all the finite partition $\Pi$ of $I$ with $|\Pi| := \displaystyle\max_{[u,v]\in \Pi} |v-u|$ (see \cite{gubinelli}). Moreover, there exists a constant $C_\alpha = C_{\alpha,|I|} >1$ with $|I| := \max{I} - \min{I}$, such that
\begin{equation}\label{roughEst}
\Big\|\int_s^t y_u dx_u - y_s x_{s,t} + y^\prime_s \X_{s,t}\Big\| \leq C_\alpha |t-s|^{3\alpha} \Big(\ltn x \rtn_{\alpha,[s,t]} \ltn R^y \rtn_{2\alpha,\Delta^2[s,t]} + \ltn y^\prime\rtn_{\alpha,[s,t]} \ltn \X \rtn_{2\alpha,\Delta^2[s,t]}\Big).
\end{equation}
From now on, if no other emphasis, we will simply write $\ltn x \rtn_{\alpha}$ or $\ltn \X \rtn_{2\alpha}$ without addressing the domain in $I$ or $\Delta^2(I)$. As proved in \cite{gubinelli}, the rough integral of controlled rough paths follows the rule of integration by parts.
In practice, we would use the $p$-var norm
\begin{eqnarray*}
	\|(y,y^\prime)\|_{x,p,I} &:=& \|y_{\min{I}}\| + \|y^\prime_{\min{I}}\| + \ltn (y,y^\prime) \rtn_{x,p,I},\qquad \text{where} \\
	\ltn (y,y^\prime) \rtn_{x,p,I} &:=& \ltn y^\prime \rtn_{p{\rm -var},I} +   \ltn R^y\rtn_{\tq,I}.
\end{eqnarray*}
Thanks to the sewing lemma \cite{gubinelli}, we can use a similar version to \eqref{roughEst} under $p-$var norm as follows.
\begin{equation}\label{roughpvar}
\Big\|\int_s^t y_u dx_u - y_s x_{s,t} + y^\prime_s \X_{s,t}\Big\| \leq C_p \Big(\ltn x \rtn_{\tp,[s,t]} \ltn R^y \rtn_{\tq,\Delta^2[s,t]} + \ltn y^\prime\rtn_{\tp,[s,t]} \ltn \X \rtn_{\tq,\Delta^2[s,t]}\Big),
\end{equation}
with constant $C_p >1$ independent of $\bx$ and $y$. 

\subsection{Existence and uniqueness theorem}

In the following, we would like to construct a greedy sequence of stopping times as presented in \cite{cassetal}. 
Given $\frac{1}{p}\in (\frac{1}{3},\nu)$, we construct for any fixed $\gamma \in (0,1)$ the sequence of greedy times $\{\tau_i(\gamma,I,\tp)\}_{i \in \N}$ w.r.t. H\"older norms 
\begin{equation}\label{greedytime}
\tau_0 = \min{I},\quad \tau_{i+1}:= \inf\Big\{t>\tau_i:  \ltn \bx \rtn_{\tp, [\tau_i,t]} = \gamma \Big\}\wedge \max{I}.
\end{equation}
Denote by $N_{\gamma,I,p}(\bx):=\sup \{i \in \N: \tau_i \leq \max{I}\}$. It follows that
\begin{equation}\label{Nest}
N_{\gamma,I,p}(\bx) \leq 1 + \gamma^{-p} \ltn \bx \rtn^p_{\tp,I}.
\end{equation}
From now on, we would like to fix $\gamma = \frac{1}{4C_pC_g}$ and would like to write in short $N_{[a,b]}(\bx)$ for convenience. The existence and uniqueness theorem and norm estimates of the solution of \eqref{fSDE0} are proved in \cite[Theorem 3.1, Theorem 3.7, Theorem 3.8, Theorem 3.9]{duc20}.

\begin{theorem}\label{RDE}
	There exists a unique solution to \eqref{fSDE0} for any initial value, whose supremum and $p-$variation norms are estimated as follows
	\begin{eqnarray}
	\|y\|_{\infty,[a,b]} &\leq&  \Big[\|y_a\| + \Big( \frac{\|f(0)\|}{L}+ \frac{1}{C_p} \Big)N_{[a,b]}(\bx)\Big] e^{4L (b-a)}, \label{estx} \\
\|y_a\|+	\ltn y,R \rtn_{p{\rm -var},[a,b]} &\leq& \Big[\|y_a\| + \Big(\frac{\|f(0)\|}{L} + \frac{1}{C_p}\Big) N_{[a,b]}(\bx)\Big]e^{4L(b-a) }N^{\frac{p-1}{p}}_{[a,b]}(\bx) , \label{estx2}
	\end{eqnarray}
	where $L =\|A\|+C_f$ and $\ltn y,R \rtn_{\tp,[s,t]} :=\ltn y \rtn_{\tp,[s,t]} + \ltn R^y \rtn_{\tq,[s,t]}$. 
\end{theorem}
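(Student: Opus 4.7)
The plan is to construct the solution as the unique fixed point of an integral map on each interval $J_i := [\tau_i, \tau_{i+1}]$ of the greedy sequence \eqref{greedytime}, and then iterate the one-step estimates across the $N := N_{[a,b]}(\bx)$ pieces.

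\emph{Local fixed point.} For each $i$, I would parametrize candidates as controlled rough paths $(z, g(z)) \in \cD^{2\alpha}_x(J_i)$ pinned at $z_{\tau_i} = y_{\tau_i}$, and study the solution map
\[
\cM(z)_t := y_{\tau_i} + \int_{\tau_i}^t [Az_s + f(z_s)]\,ds + \int_{\tau_i}^t g(z_s)\,dx_s,
\]
where the last integral is interpreted in the Gubinelli sense with derivative $Dg(z)g(z)$. The induced Lipschitz constant of $\cM$ in the controlled-path norm splits into a drift contribution of order $L|J_i|$ (direct Riemann estimate, $L=\|A\|+C_f$) and a rough contribution of order $C_p C_g \ltn \bx\rtn_{\tp, J_i} = C_p C_g \gamma$ (via \eqref{roughpvar}). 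The greedy choice $\gamma = 1/(4 C_p C_g)$ pins the latter at $1/4$, so a standard Banach fixed-point argument on a suitably chosen ball yields a unique controlled solution on $J_i$; concatenating for $i=0,\dots,N-1$ gives global existence and uniqueness on $[a,b]$.

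\emph{One-step bounds and iteration.} On each piece, \eqref{roughpvar} applied to $(y, g(y))$ with $\ltn \bx\rtn_{\tp, J_i} \leq \gamma$ produces a local estimate of the form
\[
\ltn y, R^y\rtn_{\tp, J_i} \leq \tfrac{1}{2}\|y\|_{\infty, J_i} + \tfrac{1}{C_p},
\]
after a short bootstrap using $R^y_{s,t} = y_{s,t} - g(y_s)x_{s,t}$ to close the estimate on $R^y$. Inserting this into the integral identity for $y_t$ and running Gr\"onwall in sup-norm yields a one-step bound $\|y\|_{\infty, J_i} \leq e^{4L|J_i|}\bigl(\|y_{\tau_i}\| + \tfrac{\|f(0)\|}{L} + \tfrac{1}{C_p}\bigr)$; iterating the resulting affine recursion for $\|y_{\tau_{i+1}}\|$ and using $\sum_i |J_i| = b-a$ gives \eqref{estx}. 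For \eqref{estx2}, super-additivity of the control $(s,t)\mapsto \ltn y, R^y\rtn_{\tp, [s,t]}^p$ yields
\[
\ltn y, R^y\rtn_{\tp, [a,b]}^p \leq \sum_{i=0}^{N-1} \ltn y, R^y\rtn_{\tp, J_i}^p,
\]
and combining the local bound, the global sup-norm estimate, and the $\ell^p$--$\ell^1$ comparison $\sum_{i=1}^N a_i \leq N^{(p-1)/p}(\sum_i a_i^p)^{1/p}$ produces the advertised factor $N^{(p-1)/p}$ in \eqref{estx2}.

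\emph{Main obstacle.} The delicate step is the self-consistent bootstrap in the local contraction: \eqref{roughpvar} requires $\ltn R^y\rtn_\tq$ as input, yet $R^y_{s,t} = y_{s,t} - g(y_s)x_{s,t}$ has its own $\tq$-variation governed by $\ltn y\rtn_\tp$, which in turn involves $R^y$. Closing this circularity is precisely what dictates the particular value $\gamma = 1/(4C_p C_g)$, so that the self-map and contraction inequalities close simultaneously. Extra care is needed in the $C^3_b$ case to verify that $(g(z), Dg(z)g(z))$ is itself a controlled rough path whose Gubinelli derivative is expressible in terms of $Dg(z)$ and $D^2g(z)$; the hypothesis $\|D^3 g\|_\infty < \infty$ is what guarantees local Lipschitz continuity of the composition map $(z,z') \mapsto (g(z), Dg(z)z')$ at the level of controlled paths, which is needed to close the fixed-point argument.
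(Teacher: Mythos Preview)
The paper does not actually prove Theorem~\ref{RDE} in-text; it simply cites \cite[Theorems 3.1, 3.7--3.9]{duc20} for both existence/uniqueness and the estimates. Your outline reconstructs precisely the argument one expects to find there: greedy partition, local Banach fixed point with the rough contribution pinned at $1/4$ by the choice $\gamma = 1/(4C_pC_g)$, an affine one-step recursion for the sup-norm iterated over the $N$ pieces, and a concatenation argument for the $p$-variation norm. So in substance your plan matches the paper's (outsourced) proof.

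There is, however, a genuine slip in your concatenation step. You write that ``super-additivity of the control $(s,t)\mapsto \ltn y, R^y\rtn_{\tp, [s,t]}^p$'' yields $\ltn y, R^y\rtn_{\tp, [a,b]}^p \leq \sum_i \ltn y, R^y\rtn_{\tp, J_i}^p$. This is backwards on two counts. First, super-additivity of a control $\omega$ means $\omega(s,u)+\omega(u,t)\leq\omega(s,t)$, which gives $\sum_i \leq$ whole, not the inequality you need. Second, $\ltn y,R\rtn_{\tp} = \ltn y\rtn_{\tp} + \ltn R\rtn_{\tq}$ mixes two different homogeneities, so its $p$-th power is not a control at all. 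What one actually uses is a Minkowski-type concatenation bound for each piece separately (e.g.\ $\ltn y\rtn_{\tp,[a,b]} \leq \sum_i \ltn y\rtn_{\tp,J_i}$ and the analogous bound for $\ltn R\rtn_{\tq}$), after which your H\"older comparison $\sum_{i=1}^N a_i \leq N^{(p-1)/p}(\sum_i a_i^p)^{1/p}$, combined with the local estimate $\|y_{\tau_i}\|+\ltn y,R\rtn_{\tp,J_i}\leq e^{4L|J_i|}(\|y_{\tau_i}\|+c)$ and the already-established sup-norm bound, is what produces the factor $N^{(p-1)/p}$ in \eqref{estx2}. The conclusion survives, but that paragraph needs to be rewritten with the inequalities going the right way.
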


Following the same arguments line by line, we could prove similar estimates for $g = Cy$ as follows.
\begin{theorem}\label{RDElinear}
	There exists a unique solution to the rough differential equation 
	\begin{equation}\label{Existlinear1}
	dy_t = [Ay_t + f(y_t)]dt + \Big(C y_t + g(0)\Big) d x_t,t\in \R,\ y(0)=y_0 \in \R^d, 
	\end{equation}
	for any initial value, whose supremum and $p-$variation norms of the solution are estimated as follows
	\begin{eqnarray}\label{estxlin}
	\|y\|_{\infty,[a,b]} &\leq&  \Big[\|y_a\| + M_0 N_{[a,b]}(\bx)\Big] e^{4C_f (b-a) + L N_{[a,b]}(\bx)}, \notag \\
\|y_a\|+	\ltn y,R \rtn_{p{\rm -var},[a,b]} &\leq& \Big[\|y_a\| + M_0  N_{[a,b]}(\bx)\Big]e^{4C_f(b-a) + \alpha N_{[a,b]}(\bx)}N^{\frac{p-1}{p}}_{[a,b]}(\bx), 
	\end{eqnarray}
	where $\ltn y,R \rtn_{\tp,[s,t]} :=\ltn y \rtn_{\tp,[s,t]} + \ltn R^y \rtn_{\tq,[s,t]}$, $M_0=(1+\frac{3}{2C_p}) \frac{\|g(0)\|}{\|C\|}+\frac{\|f(0)\|}{C_f}$ and $\alpha = \log(1+ \frac{3}{2C_p})$. 
\end{theorem}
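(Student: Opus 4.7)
The plan is to mirror the proof of Theorem \ref{RDE} given in \cite{duc20}, adapting each step to the linear diffusion $g(y) = Cy + g(0)$ with Lipschitz constant $C_g = \|C\|$. On each interval $[\tau_i,\tau_{i+1}]$ of the greedy sequence \eqref{greedytime} calibrated by $\gamma = \frac{1}{4C_pC_g}$, I would set up the Picard iteration
\[
y^{(k+1)}_t = y_{\tau_i} + \int_{\tau_i}^t \bigl[Ay^{(k)}_u + f(y^{(k)}_u)\bigr]du + \int_{\tau_i}^t \bigl[Cy^{(k)}_u + g(0)\bigr]dx_u
\]
in the Banach space $\cD^{2\alpha}_x([\tau_i,\tau_{i+1}])$. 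The Gubinelli derivative of the rough integrand is simply $Cy^{(k)\prime}_u$, and the sewing bound \eqref{roughpvar} combined with the calibration $4C_pC_g\gamma = 1$ makes the Picard map a strict contraction; concatenating across the greedy intervals produces a unique global solution.

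For the norm estimates, the new feature relative to Theorem \ref{RDE} is that \eqref{roughpvar} applied to $\int (Cy_u + g(0))dx_u$ becomes self-referential in $(y,y^\prime)$: the bound on the rough integral is now proportional to $C_g$ times $\ltn (y,y^\prime)\rtn_{x,\tp}$ plus a $\|g(0)\|$-dependent constant, rather than being controlled by $\|g\|_\infty$ as in the bounded case. Combining this with the Lipschitz drift contribution of order $L\|y\|_\infty(\tau_{i+1}-\tau_i) + \|f(0)\|(\tau_{i+1}-\tau_i)$, one obtains on each greedy interval an inequality of the form
\[
\ltn (y,y^\prime)\rtn_{x,\tp,[\tau_i,\tau_{i+1}]} \leq \frac{1}{2}\ltn (y,y^\prime)\rtn_{x,\tp,[\tau_i,\tau_{i+1}]} + \frac{3}{2C_p}\Bigl(\|y_{\tau_i}\| + \frac{\|g(0)\|}{\|C\|}\Bigr) + (\text{drift terms involving } f, A),
\]
which, after absorbing the controlled-rough-path seminorm into the left-hand side, produces a closed one-step bound of the form
\[
\|y_{\tau_{i+1}}\| \leq \Bigl(1 + \frac{3}{2C_p}\Bigr)\|y_{\tau_i}\|\, e^{4C_f(\tau_{i+1}-\tau_i)} + M_0\Bigl(1 + \frac{3}{2C_p}\Bigr) + \text{additive } A\text{-contribution}.
\]

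Iterating this recurrence across the $N = N_{[a,b]}(\bx)$ greedy intervals accumulates the geometric factor $(1+\frac{3}{2C_p})^N = e^{\alpha N}$ and, via telescoping, the drift factor $e^{4C_f(b-a)}$, delivering the supremum estimate in \eqref{estxlin}. The $p$-var bound then follows by substituting this supremum estimate back into \eqref{roughpvar} on each greedy subinterval and aggregating through a standard H\"older step $(\sum_i a_i^p)^{1/p} \leq N^{(p-1)/p} \max_i a_i$, which accounts for the extra factor $N^{\frac{p-1}{p}}_{[a,b]}(\bx)$. The main obstacle is the self-referential nature of the linear-growth estimate: unlike the $C^3_b$ case, where $\|g(y)\| \leq \|g\|_\infty$ decouples the integral bound from $y$, the linear case forces a genuine absorption argument whose success hinges on the calibration $4C_pC_g\gamma = 1$ making the left-hand coefficient strictly less than one. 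The accompanying bookkeeping --- in particular, cleanly separating the time-dependent drift exponent $4C_f(b-a)$ from the step-dependent noise exponent $LN$, and deciding where the $Ay$ term is charged between them --- is where the main care is required.
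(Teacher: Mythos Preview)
Your proposal is correct and matches the paper's own treatment: the paper gives no separate proof for this theorem, merely remarking that one ``follow[s] the same arguments line by line'' as in Theorem~\ref{RDE} (proved in \cite{duc20}), and your sketch is precisely an elaboration of that adaptation, correctly identifying the self-referential absorption step forced by linear growth and the resulting multiplicative factor $(1+\tfrac{3}{2C_p})$ per greedy interval that produces $\alpha = \log(1+\tfrac{3}{2C_p})$.
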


\begin{theorem}\label{RDEdifference}
	Consider two solutions $y_t(\bx,y_a)$ and $\ty_t(\bx,\ty_a)$ and their difference $z_t = \ty_t -y_t$, which satisfies the integral rough equation
	\begin{equation}\label{RDEdiff}
	z_t = z_a +\int_a^t [f(\ty_s)-f(y_s)]ds+\int_a^t [g(\ty_s) - g(y_s)]dx_s.
	\end{equation}
	Then
	\begin{equation}\label{roughest4}
\|z_a\| +	\ltn z,R \rtn_{p{\rm -var},[a,b]} \leq \|z_a\|  e^{4L(b-a)} \Big(1+ \Big[8C_p C_g \Lambda(\bx,[a,b])\Big]^{p-1} \ltn \bx \rtn_{p{\rm -var},[a,b]}^{p-1} \Big),
	\end{equation}
	where 
	\begin{equation}\label{Lambda}
	\Lambda(\bx,[a,b]) =1 +2 \Big[\|y_a\| \vee \|\ty_a\|+ \Big(\frac{\|f(0)\|}{C_f} + \frac{1}{C_p}\Big) N_{[a,b]}(\bx)\Big]e^{4L(b-a) }N^{\frac{p-1}{p}}_{[a,b]}(\bx).
	\end{equation}
\end{theorem}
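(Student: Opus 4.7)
The plan is to follow the standard pattern for rough difference estimates: treat $z=\ty-y$ as a controlled rough path with Gubinelli derivative $z'=g(\ty)-g(y)$, obtain an a priori bound on sufficiently short intervals on which $\ltn\bx\rtn_{\tp}$ is small, and then iterate the local bound across the greedy subdivision $\{\tau_i\}$ of \eqref{greedytime} with $\gamma=\frac{1}{4C_pC_g}$.

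\textbf{Step 1 (Controlled rough path structure).} Since $(y,g(y))$ and $(\ty,g(\ty))$ both belong to $\cD^{2\alpha}_x$, the pair $(z,z')=(\ty-y,\,g(\ty)-g(y))$ is also a controlled rough path, with remainder $R^z_{s,t}=z_{s,t}-z'_s\otimes x_{s,t}$. Plugging the integral equation \eqref{RDEdiff} into this identity, $R^z$ decomposes into a Lebesgue drift part $\int_s^t[f(\ty_r)-f(y_r)]dr$ and the rough-integral correction estimated via the sewing lemma bound \eqref{roughpvar} applied to the integrand $(g(\ty)-g(y),\,Dg(\ty)g(\ty)-Dg(y)g(y))$.

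\textbf{Step 2 (Local bound on a greedy interval).} On $[\tau_i,\tau_{i+1}]$ we have $\ltn\bx\rtn_{\tp}\leq\gamma$. The drift contributes at most $C_f\int\|z\|\,dr$, which can be absorbed into a $e^{4L(\tau_{i+1}-\tau_i)}$ factor via Gronwall. For the rough piece, using $C^2_b$-smoothness of $g$ (or linearity) and the identity $g(\ty)-g(y)=\int_0^1 Dg(y+\theta z)z\,d\theta$, one derives a Lipschitz-in-controlled-rough-paths estimate
\[
\ltn (g(\ty)-g(y),\,Dg(\ty)g(\ty)-Dg(y)g(y))\rtn_{x,\tp,[\tau_i,\tau_{i+1}]}
\;\leq\; C_g\bigl(\|z_{\tau_i}\|+\ltn z,R\rtn_{\tp,[\tau_i,\tau_{i+1}]}\bigr)\,\Lambda_i,
\]
where $\Lambda_i$ captures $1$ plus the controlled-path norms of $(y,y')$ and $(\ty,\ty')$ on the same interval, bounded by Theorem \ref{RDE} with precisely the quantity appearing in \eqref{Lambda}. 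Inserting this into \eqref{roughpvar} and exploiting $C_pC_g\gamma=\tfrac14$ lets the term $\ltn z,R\rtn_{\tp}$ on the right-hand side be absorbed into the left, producing a local bound of the shape
\[
\ltn z,R\rtn_{\tp,[\tau_i,\tau_{i+1}]}\;\leq\; \|z_{\tau_i}\|\,e^{4L(\tau_{i+1}-\tau_i)}\bigl(1+8C_pC_g\Lambda(\bx,[a,b])\,\ltn\bx\rtn_{\tp,[\tau_i,\tau_{i+1}]}\bigr).
\]

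\textbf{Step 3 (Iteration and combination).} The sup-norm part $\|z_{\tau_{i+1}}\|$ is controlled by the same right-hand side, so iterating over $i=0,\dots,N_{[a,b]}(\bx)-1$ and multiplying gives an aggregate factor $e^{4L(b-a)}$ together with the product $\prod_i(1+8C_pC_g\Lambda\ltn\bx\rtn_{\tp,[\tau_i,\tau_{i+1}]})$. To convert this product into the stated form $1+[8C_pC_g\Lambda]^{p-1}\ltn\bx\rtn^{p-1}_{\tp,[a,b]}$, I would use the elementary estimate $\prod(1+a_i)\leq\exp(\sum a_i)$ combined with Hölder's inequality and superadditivity of $p$-variation: $\sum_i\ltn\bx\rtn_{\tp,[\tau_i,\tau_{i+1}]}\leq N^{1/p'}\bigl(\sum_i\ltn\bx\rtn^{p}_{\tp}\bigr)^{1/p}\leq N^{(p-1)/p}\ltn\bx\rtn_{\tp,[a,b]}$, combined with \eqref{Nest} at the level $\gamma=\frac{1}{4C_pC_g}$ which yields the exponent $p-1$ on both $8C_pC_g\Lambda$ and $\ltn\bx\rtn_{\tp,[a,b]}$.

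The main technical obstacle is the Lipschitz estimate for the Gubinelli derivative of the difference in Step 2: bounding $\ltn Dg(\ty)g(\ty)-Dg(y)g(y)\rtn_{\tp}$ and the associated remainder $R^{g(\ty)}-R^{g(y)}$ requires a careful two-variable Taylor expansion that brings out a factor $C_g$ times the controlled-path norms of $y,\ty$, which is precisely where the $C^2_b$ hypothesis and the norm bound $\Lambda$ from Theorem \ref{RDE} enter. Tracking the numerical constants so that the stated factor $8C_pC_g$ (rather than something larger) emerges will require the absorption step to be tight, and is the principal bookkeeping subtlety.
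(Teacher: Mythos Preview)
The paper does not give its own proof of this theorem; it is imported verbatim from \cite[Theorem 3.9]{duc20} (see the sentence preceding Theorem \ref{RDE}). So there is no in-paper argument to compare against directly. Your Steps 1--2 are the right skeleton and match the structure that the paper invokes elsewhere (the Lipschitz-type bounds on $[g(\ty)-g(y)]^\prime$ and $R^{g(\ty)-g(y)}$ you need are exactly those quoted from \cite[Proposition 3.3]{duc20} in the proof of Proposition \ref{RDEgdiff}).

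There is, however, a real gap in your Step 3. On the greedy intervals with $\gamma=\tfrac{1}{4C_pC_g}$ the local factor you obtain is $1+8C_pC_g\Lambda\,\ltn\bx\rtn_{\tp,[\tau_i,\tau_{i+1}]}=1+2\Lambda$, and since $\Lambda\geq 1$ this is not small. Iterating and bounding $\prod_i(1+a_i)\leq\exp\bigl(\sum_i a_i\bigr)$ followed by H\"older still lands you with an exponential of the form $\exp\bigl(8C_pC_g\Lambda\,N^{(p-1)/p}\ltn\bx\rtn_{\tp,[a,b]}\bigr)$, not the polynomial $1+\bigl[8C_pC_g\Lambda\bigr]^{p-1}\ltn\bx\rtn_{\tp,[a,b]}^{p-1}$ that the statement demands. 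There is no elementary inequality that converts the former into the latter. A related issue already shows up in Step 2: in your Lipschitz bound you place the factor $\Lambda_i$ in front of the whole bracket $\|z_{\tau_i}\|+\ltn z,R\rtn_{\tp}$, so after inserting into \eqref{roughpvar} the coefficient of $\ltn z,R\rtn_{\tp}$ on the right is $C_pC_g\Lambda_i\gamma=\tfrac{\Lambda_i}{4}$, and absorption fails once $\Lambda_i\geq 4$.

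The shape of \eqref{roughest4} tells you what to do instead: run the greedy construction at the \emph{finer} level $\gamma'=\tfrac{1}{8C_pC_g\,\Lambda(\bx,[a,b])}$. On each such subinterval all coefficients---including those carrying $\ltn y,R\rtn$, $\ltn\ty,R\rtn$ through $\Lambda$---are at most $\tfrac12$, so absorption succeeds with a uniformly bounded local multiplier. Concatenating over the $N'_{[a,b]}$ subintervals then produces the factor $(N'_{[a,b]})^{(p-1)/p}$, and since $N'_{[a,b]}\leq 1+(8C_pC_g\Lambda)^p\ltn\bx\rtn_{\tp,[a,b]}^p$ the subadditivity $(1+a)^{(p-1)/p}\leq 1+a^{(p-1)/p}$ yields exactly $1+\bigl[8C_pC_g\Lambda\bigr]^{p-1}\ltn\bx\rtn_{\tp,[a,b]}^{p-1}$. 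This is the missing idea; once you reorganize Steps 2--3 around the $\Lambda$-dependent greedy times, the constants fall out as stated.
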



\section{Random attractors}

\subsection{Generation of rough cocycle and rough flows}

In this subsection we would like to present the generation of a random dynamical system from rough differential equations \eqref{fSDE0}, which is based mainly on the work in \cite{BRSch17} with only a small modification. Recall that a {\it random dynamical system} is defined by a mapping $\varphi(t,\omega)x_0 := x(t,\omega,x_0)$ on the probability space $(\Omega,\mathcal{F},\mP)$ equipped with a metric dynamical system $\theta$, i.e. $\theta_{t+s} = \theta_t \circ \theta_s$ for all $t,s \in \R$, such that $\varphi: \R \times \Omega \times \R^d \to \R^d$ is a measurable mapping which is also continuous in $t$ and $x_0$ and satisfies the cocycle property
\[
\varphi(t+s,\omega) =\varphi(t,\theta_s \omega) \circ \varphi(s,\omega),\quad \forall t,s \in \R, 
\] 
(see \cite{arnold}). Now denote by $T^2_1(\R^m) = 1 \oplus \R^m \oplus (\R^m \otimes \R^m)$ the set with the tensor product
\[
(1,g^1,g^2) \otimes (1,h^1,h^2) = (1, g^1 + h^1, g^1 \otimes h^1 + g^2 +h^2),\quad \forall\  {\bf g} =(1,g^1,g^2), {\bf h} = (1,h^1,h^2) \in T^2_1(\R^m).
\]
Then it can be shown that $(T^2_1(\R^m),\otimes)$ is a topological group with unit element ${\bf 1} = (1,0,0)$. \\
For $\beta \in (\frac{1}{p},\nu)$, denote by $\cC^{0,p-\rm{var}}([a,b],T^2_1(\R^m))$ 
the closure of $\cC^{\infty}([a,b],T^2_1(\R^m))$ in $\cC^{\tp}([a,b],T^2_1(\R^m))$, and by 
$\cC_0^{0,p-\rm{var}}(\R,T^2_1(\R^m))$ the space of all $x: \R\to \R^m$ such that $x|_I \in \cC^{0,\tp}(I, T^2_1(\R^m))$ for each compact interval $I\subset\R$ containing $0$. Then $\cC_0^{0,p-\rm{var}}(\R,T^2_1(\R^m))$ is equipped with the compact open topology given by the $p-$variation norm, i.e  the topology generated by the metric:
\[
d_p(\bx_1,\bx_2): = \sum_{k\geq 1} \frac{1}{2^k} (\|\bx_1-\bx_2\|_{p{\rm-var},[-k,k]}\wedge 1),
\]
where the $\tp$ norm is given in \eqref{pvarnorm}. As a result, it is separable and thus a Polish space. \\
Let us consider a stochastic process $\bar{\bX}$ defined on a probability space $(\bar{\Omega},\bar{\mathcal{F}},\bar{\bP})$  with realizations in $(\cC^{0,\tp}_0(\R,T^2_1(\R^m)), \mathcal{F})$. Assume further that $\bar{\bX}$ has stationary increments. Assign
\[
\Omega:=\cC_0^{0,\tp}(\R,T^2_1(\R^m))
\]
and equip with the Borel $\sigma -$ algebra $\mathcal{F}$ and let $\bP$ be the law of $\bar{\bX}$. Denote by $\theta$ the {\it Wiener-type shift}
\begin{equation}\label{shift}
(\theta_t \omega)_\cdot = \omega_t^{-1}\otimes \omega_{t+\cdot},\forall t\in \R, \omega \in \cC^{0,\tp}_0(\R,T^2_1(\R^m)),
\end{equation}  
and define the so-called {\it diagonal process}  $\bX: \R \times \Omega \to T^2_1(\R^m), \bX_t(\omega) = \omega_t$ for all $t\in \R, \omega \in \Omega$. Due to the stationarity of $\bar{\bX}$, it can be proved that $\theta$ is invariant under $\bP$, then forming a continuous (and thus measurable) dynamical system on $(\Omega, \mathcal{F},\bP)$ \cite[Theorem 5]{BRSch17}. Moreover, $\bX$ forms a {\it $p-$ rough path cocycle}, namely, $\bX_\cdot(\omega) \in \cC_0^{0,\tp}(\R,T^2_1(\R^m))$ for every $\omega \in \Omega$, which satisfies the {\it cocyle relation}:
\[
\bX_{t+s}(\omega) = \bX_s(\omega) \otimes \bX_t(\theta_s \omega), \forall  \omega \in \Omega, t,s\in \R,
\]
in the sense that $\bX_{s,s+t} = \bX_t(\theta_s \omega)$ with the increment notation $\bX_{s,s+t} := \bX^{-1}_s \otimes \bX_{s+t}$. It is important to note that the two-parameter flow property
\[
\bX_{s,u} \otimes \bX_{u,t} = \bX_{s,t}, \forall s,t \in \R 
\]
is equivalent to the fact that $\bX_t(\omega) = (1,x_t(\omega),\X_{0,t}(\omega))$, where  $x_\cdot(\omega): \R \to \R^m$ and $\X_{\cdot,\cdot}(\omega): I \times I \to \R^m \otimes \R^m$ are random funtions satisfying Chen's relation relation \eqref{chen}. To fulfill the H\"older continuity of almost all realizations, assume further that for any given $T>0$, there exists a constant $C_{T,\nu}$ such that
\begin{equation}\label{expect}
E \Big(\|x_{s,t} \|^p +\|\X_{s,t}\|^{q}\Big)\leq C_{T,\nu} |t-s|^{p \nu },\forall s,t \in [0,T].
\end{equation}
Then due to the Kolmogorov criterion for rough paths \cite[Appendix A.3]{friz}, for any $\beta\in (\frac{1}{p},\nu)$ there exists a version of $\omega-$wise $(x,\X)$ and random variables $K_\beta \in L^p, \K_\beta \in L^{\frac{p}{2}}$, such that, $\omega-$wise speaking, for all $s,t \in I$,
\[
\|x_{s,t}\| \leq K_\alpha |t-s|^\beta, \quad \|\X_{s,t}\| \leq \K_\beta |t-s|^{2\beta}, \forall s,t \in \R
\] 
so that $(x,\X) \in \cC^\beta.$ Moreover, we could choose $\beta$ such that 
\begin{eqnarray*}
	&&x \in C^{0,\beta}(I):= \{x \in C^\beta: \lim \limits_{\delta \to 0}\sup_{0<t-s <\delta} \frac{\|x_{s,t}\|}{|t-s|^\beta} = 0\}, \\
	&&\X \in C^{0,2\beta}(\Delta^2(I)):= \{ \X \in C^{2\beta}(\Delta^2(I)): \lim \limits_{\delta \to 0}\sup_{0<t-s <\delta} \frac{\|\X_{s,t}\|}{|t-s|^{2\beta}} = 0  \}, 
\end{eqnarray*}
then $\cC^{0,\beta}(I) \subset C^{0,\beta}(I) \oplus C^{0,2\beta}(\Delta^2(I))$ is separable due to the separability of $C^{0,\beta}(I)$ and $C^{0,2\beta}(\Delta^2(I))$.  In particular, due to the fact that $\ltn \bX_\cdot(\theta_h \omega) \rtn_{\tp,[s,t]} = \ltn \bX_\cdot(\omega) \rtn_{\tp,[s+h,t+h]}$, it follows from  Birkhorff ergodic theorem and \eqref{expect} that
\begin{equation}\label{gamma}
\Gamma(\bx,p) := \limsup \limits_{n \to \infty} \Big(\frac{1}{n}\sum_{k=1}^{n}  \ltn \theta_{-k}\bx \rtn^p_{p{\rm -var},[-1,1]}\Big)^{\frac{1}{p}} =\Big(E \ltn \bX_\cdot(\cdot) \rtn^p_{\tp,[-1,1]}\Big)^{\frac{1}{p}}= \Gamma(p)
\end{equation}	
for almost all realizations $\bx_t$ of the form $\bX_t(\omega)$. We assume additionally that  $(\Omega, \mathcal{F}, \bP,\theta) $  is ergodic. 

\begin{remark}
 It is important to note that, due to \cite[Corollary 9]{BRSch17}, this construction is possible for $X: \R \to \R^m$ to be a continuous, centered Gaussian process with stationary increments and independent components, satisfying: there exists for any $T>0$ a constant $C_T$ such that for all $p \geq \frac{1}{\bar{\nu}}$
\begin{equation}\label{Gaussianexpect}
E \|X_t- X_s\|^{p} \leq C_T |t-s|^{p\nu},\quad \forall s,t \in [0,T]. 
\end{equation}
By Kolmogorov theorem, for any $\beta \in (\frac{1}{p},\nu)$ and any interval $[0,T]$ almost all realization of $X$ will be in $C^{0,\beta}([0,T])$. Then $X$ has its covariance function with finite 2-dimensional $\rho-$variation on every square $[s,t]^2 \in \R^2$ for some $\rho \in [1,2)]$, and $\bar{\bX}$ is the natural lift of $X$, in the sense of Friz-Victoir \cite[Chapter 15]{friz}, with sample paths in the space $\cC_0^{0,\beta-\rm{Hol}}(\R,T^2_1(\R^m))$, for every $p > 2\rho$.\\ For instance, such a stochastic process $X$, in particular, can be a $m-$ dimensional fractional Brownian motion $B^H$ with independent components \cite{mandelbrot} and Hurst exponent $H \in (\frac{1}{3},\frac{1}{2})$, i.e. a family of $B^H = \{B^H_t\}_{t\in \R}$ with continuous sample paths and 
\[
E [B^H_t B^H_s] = \frac{1}{2}\Big(t^{2H} + s^{2H} - |t-s|^{2H}\Big) I^{m \times m}, \forall t,s \in \R_+.
\]    
For any fixed interval $[0,T]$, the covariance of increments of fractional Brownian motions $R: [0,T]^4 \to \R^{m \times m}$, defined by 
\[
R \Big(\begin{array}{cc} s&t \\ s^\prime & t^\prime \end{array}\Big)  := E(B^H_{s,t} B^H_{s^\prime,t^\prime})
\]
is of finite $\rho-$ variation norm for $\rho = \frac{1}{2H}$, i.e.
\[
\|R\|_{I \times I^\prime,\rho} := \Big\{\sup_{\Pi(I), \Pi^\prime(I^\prime)} \sum_{[s,t] \in I, [s^\prime, t^\prime] \in I^\prime} \Big|R \Big(\begin{array}{cc} s&t \\ s^\prime & t^\prime \end{array}\Big)  \Big|^\rho \Big\}^{\frac{1}{\rho}} < \infty,
\]
and 
\[
\|R\|_{[s,t]^2,\rho} \leq M_{\rho,T} |t-s|^{\frac{1}{\rho}}, \forall t,s \in [0,T].
\]
Then one can prove that the integral in $L^2-$ sense 
\[
\X_{s,t}^{i,j} = \lim \limits_{|\Pi| \to 0} \int_\Pi X^i_{s,r} dX^j_r = \lim \limits_{|\Pi| \to 0}  \sum_{[u,v] \in \Pi} X^i_{s,u} X^j_{u,v}, \forall s,t \in [0,T] 
\]
is well-defined regardless of the chosen partition $\Pi$ of $[s,t]$. Moreover, 
\[
\X_{s,t}^{i,i} = \frac{1}{2} (X^i_{s,t})^2,\quad \X_{s,t}^{i,j} + \X_{s,t}^{j,i} = X^i_{s,t} X^j_{s,t},  
\]
and for $\frac{1}{p}<\nu < \frac{1}{2\rho} = H$, there exist constants $C(p,\rho,m,T), C(p,\rho,m,T,\nu)>0$ such that
\begin{eqnarray}
E\Big[\|X_{s,t}\|^p + \|\X_{s,t}\|^q \Big] &\leq& C(p,\rho,m,T) |t-s|^{\frac{p}{2\rho}} = C(p,\rho,m,T) |t-s|^{pH},\quad \forall s,t \in [0,T], \notag\\
E \Big[\ltn X\rtn_{\nu}^p + \ltn \X \rtn_{2\nu}^q \Big] &\leq& C(p,\rho,m,T,\nu) M^q.
\end{eqnarray}
Therefore, almost sure all realizations $\bx = (X,\X)$ belong to the set $\cC^\beta([0,T])$ and satisfy Chen's relation \eqref{chen}.
\end{remark}

We reformulate a result from \cite[Theorem 21]{BRSch17} for our situation as follows.
\begin{proposition}
	Let $(\Omega,\cF,\bP, \theta)$ be a measurable metric dynamical system and let $\bX: \R \times \Omega \to T^2_1(\R^m)$ be a $p$- rough cocycle for some $2\leq p <3$. Then there exists a unique continuous random dynamical system $\varphi$ over $(\Omega,\cF,\bP, \theta)$ which solves the rough differential equation 
	\begin{equation}\label{stochYDE}
	dy_t = [Ay_t + f(y_t)]dt + g(y_t)d \bX_t(\omega), t \geq 0.
	\end{equation}
\end{proposition}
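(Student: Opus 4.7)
The plan is to build $\varphi$ directly from the pathwise solution map furnished by Theorem \ref{RDE} (or Theorem \ref{RDElinear} in the linear case), and then to extract the random dynamical system structure from the shift $\theta$ defined in \eqref{shift}. For every $\omega \in \Omega$ the sample $\bX_\cdot(\omega)$ is, by construction, an element of $\cC_0^{0,p\text{-var}}(\R, T^2_1(\R^m))$, hence a genuine $p$-rough path on every compact interval. Theorem \ref{RDE} therefore produces a unique continuous solution $t \mapsto y_t(\omega, y_0)$ for every $y_0 \in \R^d$, and we set $\varphi(t,\omega) y_0 := y_t(\omega, y_0)$ for $t \geq 0$.

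First I would verify the cocycle property. Fix $s, t \geq 0$, $\omega\in\Omega$, $y_0 \in \R^d$, and let $z_0 := \varphi(s,\omega) y_0$. The curve $u \mapsto y_{s+u}(\omega,y_0)$ on $[0,t]$ solves \eqref{stochYDE} driven by the increments $\bX_{s, s+\cdot}(\omega)$ with initial datum $z_0$. By the cocycle relation $\bX_{s,s+u}(\omega) = \bX_u(\theta_s \omega)$ recorded just after \eqref{shift}, these increments coincide with $\bX_\cdot(\theta_s\omega)$, so the restarted curve is also a solution of \eqref{stochYDE} driven by $\bX_\cdot(\theta_s\omega)$ with initial datum $z_0$. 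Pathwise uniqueness from Theorem \ref{RDE} then forces $y_{s+t}(\omega,y_0) = y_t(\theta_s\omega, z_0)$, which is exactly $\varphi(s+t,\omega) = \varphi(t,\theta_s\omega) \circ \varphi(s,\omega)$.

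Next, continuity of $\varphi(\cdot,\omega)\,\cdot$ in $(t,y_0)$: continuity in $t$ is immediate from the $\beta$-H\"older regularity of $y_\cdot(\omega,y_0)$, while continuity in $y_0$ locally uniformly in $t$ follows from Theorem \ref{RDEdifference}. Indeed, applying \eqref{roughest4} to two solutions $y,\ti y$ with initial data $y_0,\ti y_0$ on each greedy subinterval $[\tau_i,\tau_{i+1}]$ of \eqref{greedytime} and iterating over $i = 0,\dots,N_{[0,T]}(\bx)$ yields a bound of the form $\|\ti y_t - y_t\| \leq \|\ti y_0 - y_0\|\,\Psi(\bx, y_0, \ti y_0, T)$ with $\Psi$ finite and bounded in $\ti y_0$ near $y_0$, so $\ti y \to y$ uniformly on $[0,T]$ as $\ti y_0 \to y_0$.

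The genuinely delicate point, and the main obstacle, is joint measurability of $(t,\omega,y_0)\mapsto\varphi(t,\omega)y_0$. For this I would exploit that the diagonal process $\omega \mapsto \bX_\cdot(\omega)$ is by construction measurable into the Polish space $\cC_0^{0,p\text{-var}}(\R, T^2_1(\R^m))$, and that the It\^o--Lyons solution map $(\bx,y_0)\mapsto y_\cdot(\bx,y_0)$ is continuous in the $p$-variation rough topology. The latter is the standard continuity of the rough integral, obtained by refining the stability estimate \eqref{roughpvar} and the argument behind Theorem \ref{RDEdifference} to allow $\bx$ and $\ti\bx$ to differ, interval by interval between consecutive greedy times. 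Composition of a measurable map with a continuous one then gives joint measurability, completing the construction; uniqueness of $\varphi$ as an RDS solving \eqref{stochYDE} follows again from pathwise uniqueness in Theorem \ref{RDE}.
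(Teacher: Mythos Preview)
The paper does not actually prove this proposition: it is stated as a reformulation of \cite[Theorem 21]{BRSch17} and no argument is given. Your sketch is essentially the route taken in that reference, so in spirit you are aligned with what the paper invokes.

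Your outline is correct. The cocycle identity via $\bX_{s,s+u}(\omega)=\bX_u(\theta_s\omega)$ together with pathwise uniqueness is exactly the right mechanism, and continuity in $(t,y_0)$ follows from Theorem~\ref{RDEdifference} as you say. For measurability you correctly isolate the key input: continuity of the solution map $(\bx,y_0)\mapsto y_\cdot$ in the rough-path topology, composed with the (tautologically measurable) diagonal process $\omega\mapsto\omega$ on the Polish space $\Omega=\cC_0^{0,p\text{-var}}(\R,T^2_1(\R^m))$. Note, however, that this continuity in the \emph{driving path} is not contained in the paper's own estimates (Theorem~\ref{RDEdifference} only varies the initial condition, not $\bx$); you are implicitly importing the standard It\^o--Lyons continuity theorem from rough path theory (e.g.\ \cite[Chapter 8]{frizhairer} or \cite{friz}). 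That is perfectly legitimate, and indeed it is precisely what \cite{BRSch17} relies on, but it is worth being explicit that this step lies outside the self-contained estimates of the present paper.
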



\subsection{Existence of pullback attractors}

Given a random dynamical system $\varphi$ on $\R^d$, we follow \cite{crauelkloeden}, \cite[Chapter 9]{arnold} to present the notion of random pullback attractor. Recall that a set $\hat{M} :=
\{M(x)\}_{x \in \Omega}$ a {\it random set}, if $y
\mapsto d(y|M(x))$ is $\cF$-measurable for each $y \in \R^d$, where $d(E|F) = \sup\{\inf\{d(y, z)|z \in F\} | y \in E\}$  for $E,F$ are nonempty subset of $\R^d$ and $d(y|E) = d(\{y\}|E)$.  
An {\it universe} $\cD$ is a family of random sets
which is closed w.r.t. inclusions (i.e. if $\hat{D}_1 \in \cD$ and
$\hat{D}_2 \subset \hat{D}_1$ then $\hat{D}_2 \in \cD$). \\
In our setting, we define the universe $\cD$ to be a family of {\it tempered} random sets $D(x)$, which means the following: A random variable $\rho(x) >0$ is called {\it tempered} if it satisfies
\begin{equation}\label{tempered}
\lim \limits_{t \to \pm \infty} \frac{1}{t} \log^+ \rho(\theta_{t}x) =0,\quad \text{a.s.}
\end{equation}
(see e.g. \cite[pp. 164, 386]{arnold}) which, by \cite[p. 220]{ImkSchm01}), is equivalent to the sub-exponential growth
\[
\lim \limits_{t \to \pm \infty} e^{-c |t|} \rho(\theta_{t}x) =0\quad \text{a.s.}\quad \forall c >0.
\]
A random set $D(x)$ is called {\it tempered} if it is contained in a ball $B(0,\rho(x))$ a.s., where the radius $\rho(x)$ is a tempered random variable.\\
A random subset $A$ is called invariant, if $\varphi(t,x)A(x) = A(\theta_t x)$ for all $t\in \R,\; x\in\Omega.$ An invariant random compact set $\mathcal{A}  \in \cD$ is called a {\it pullback random attractor} in $\cD$, if $\mathcal{A} $ attracts
any closed random set $\hat{D} \in \cD$ in the pullback sense,
i.e.
\begin{equation}\label{pullback}
\lim \limits_{t \to \infty} d(\varphi(t,\theta_{-t}x)
\hat{D}(\theta_{-t}x)| \mathcal{A} (x)) = 0.
\end{equation}
$\mathcal{A} $ is called a {\it forward random attractor} in $\cD$, if $\mathcal{A} $ is invariant and attracts
any closed random set $\hat{D} \in \cD$ in the forward sense,
i.e.
\begin{equation}\label{forward}
\lim \limits_{t \to \infty} d(\varphi(t,x)
\hat{D}(x)| \mathcal{A} (\theta_{t}x)) = 0.
\end{equation}
The existence of a random pullback attractor follows from the existence of a random pullback absorbing set (see \cite[Theorem 3]{crauelkloeden}). A random set $\mathcal{B}  \in \cD$ is called {\it pullback
	absorbing} in a universe $\cD$ if $\mathcal{B} $ absorbs all sets in
$\cD$, i.e. for any $\hat{D} \in \cD$, there exists a time $t_0 =
t_0(x,\hat{D})$ such that
\begin{equation}\label{absorb}
\varphi(t,\theta_{-t}x)\hat{D}(\theta_{-t}x) \subset
\mathcal{B} (x), \ \textup{for all}\  t\geq t_0.
\end{equation}
Given a universe $\cD$ and a random compact
pullback absorbing set $\mathcal{B} \in \cD$, there exists a unique random pullback attractor
in $\cD$, given by
\begin{equation}\label{at}
\mathcal{A}(x) = \cap_{s \geq 0} \overline{\cup_{t\geq s} \varphi(t,\theta_{-t}x)\mathcal{B}(\theta_{-t}x)}. 
\end{equation}
Thanks to the rule of integration by parts for rough integral, we prove the "variation of constants" formula for rough differential equations as below.
\begin{lemma}
	The solution $y_t$ of \eqref{fSDE0} satisfies
	\begin{equation}\label{variation}
	y_t =\Phi (t-a)y_a + \int_a^t\Phi (t-s)f(y_s) ds + \int_a^t\Phi (t-s)g(y_s) d x_s,\quad \forall t\geq a.
	\end{equation} 
\end{lemma}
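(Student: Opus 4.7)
The plan is to apply the Leibniz (integration-by-parts) rule for rough integrals to the product $s \mapsto \Phi(t-s)y_s$ on $[a,t]$, keeping $t$ fixed. By Theorem \ref{RDE}, $(y,g(y))$ is a controlled rough path for $x$ with remainder $R^y$ of finite $q$-variation, while $s\mapsto \Phi(t-s) = e^{A(t-s)}$ is $C^\infty$ in $s$ and thus of finite $1$-variation on $[a,t]$. Consequently $\Phi(t-\cdot)y_\cdot$ is itself controlled by $x$, with Gubinelli derivative $\Phi(t-\cdot)g(y_\cdot)$, so that the rough integrals $\int_a^t \Phi(t-s)f(y_s)\,ds$ and $\int_a^t \Phi(t-s)g(y_s)\,dx_s$ are well-defined in the sense of Gubinelli.

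Next I would invoke the product rule for rough integrals with one factor of finite $1$-variation; the cross-bracket term vanishes and one obtains
\begin{equation*}
y_t - \Phi(t-a)y_a = \int_a^t y_s\, d_s\Phi(t-s) + \int_a^t \Phi(t-s)\, dy_s.
\end{equation*}
Since $\partial_s \Phi(t-s) = -A\Phi(t-s)$, and since by \eqref{fSDE0} one has $dy_s = [Ay_s+f(y_s)]\,ds + g(y_s)\,dx_s$, substituting and using the commutativity $\Phi(t-s)A = A\Phi(t-s)$ makes the two $Ay_s$ contributions cancel exactly, leaving the right-hand side of \eqref{variation}.

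The main obstacle is to justify rigorously the bracket-free Leibniz formula in the rough setting. My preferred route is a direct sewing-lemma argument applied to the increments
\begin{equation*}
\delta_i := \Phi(t-s_{i+1})y_{s_{i+1}} - \Phi(t-s_i)y_{s_i} - \Phi(t-s_i)\, y_{s_i,s_{i+1}} - y_{s_i}[\Phi(t-s_{i+1})-\Phi(t-s_i)],
\end{equation*}
which are of order $|s_{i+1}-s_i|^{1+\alpha}$ thanks to the smoothness of $\Phi$; the left-hand sides telescope to $y_t - \Phi(t-a)y_a$, while the two Riemann sums on the right converge to the claimed integrals via \eqref{roughpvar}. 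As a safety net, one can instead mollify $x$ into smooth paths $x^n$, apply the classical variation-of-constants formula pathwise to the smooth ODE solutions $y^n$, and pass to the limit in $n$ using continuity of the It\^o--Lyons map and of the rough integral in the controlled-path topology.
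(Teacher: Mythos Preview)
Your proposal is correct and follows essentially the same route as the paper. Both arguments expand the increment of $\Phi(t-s)y_s$ (the paper uses $\Phi(-u)y_u$ and multiplies by $\Phi(t)$ at the end), identify the cross term $[\Phi(t-s_{i+1})-\Phi(t-s_i)]\,y_{s_i,s_{i+1}}$ as a negligible remainder of order $|s_{i+1}-s_i|^{1+\alpha}$, and recognise the remaining Riemann sums as the desired integrals. The only cosmetic difference is that the paper substitutes the local expansion $y_{u,v}=[Ay_u+f(y_u)](v-u)+g(y_u)\otimes x_{u,v}+[g(y)]'_u\otimes\X_{u,v}+O(|v-u|^{3\alpha})$ directly at the increment level, so the enhanced Riemann sums for $\int \Phi(t-s)g(y_s)\,dx_s$ appear explicitly, whereas you first write $\int \Phi(t-s)\,dy_s$ and then invoke the substitution $dy_s=[Ay_s+f(y_s)]ds+g(y_s)dx_s$; the latter is perfectly legitimate but is precisely what the paper's direct expansion establishes in one step. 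Your mollification backup is a genuine alternative not used in the paper.
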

\begin{proof}
	Assign $z_u := \Phi(-u) y_u$. Observe that $\frac{d}{du}\Phi(u) = A \Phi(u)$ and $A\Phi(u) = \Phi(u)A$ for all $u\in \R$. As a result, we can write in the discrete form using \eqref{roughpvar} 
	\begin{eqnarray}\label{variationeq1}
	&& \Phi(-v)y_v - \Phi(-u)y_u \notag\\ 
	&=& [\Phi(-v) -\Phi(-u)]y_u + \Phi(-u) y_{u,v} +[\Phi(-v) -\Phi(-u)]y_{u,v} \notag\\
	&=& - A \Phi(-u)y_u (v-u) + \Phi(-u) \Big[A y_u + f(y_u)\Big] (v-u) \notag\\
	&& + \Phi(-u) \Big[g(y_u) \otimes x_{u,v} + Dg(y_u)g(y_u) \otimes \X_{u,v} \Big] + \mathcal{O}(|v-u|^{3\alpha}) \notag\\
	&=& \Phi(-u) f(y_u)(v-u)+ \Phi(-u) \Big[g(y_u) \otimes x_{u,v} + [g(y)]^\prime_u \otimes \X_{u,v} \Big] + \mathcal{O}(|v-u|^{3\alpha}). 
	\end{eqnarray}
	On the other hand,
	\[
	\Phi(-v)g(y_v) - \Phi(-u)g(y_u) = [\Phi(-v) -\Phi(-u)]g(y_v) + \Phi(-u) \Big( [g(y)]^\prime_u \otimes x_{u,v} + R^{g(y)}_{u,v}\Big),
	\]
	thus $\Phi(-\cdot)g(y)$ is also controlled by $x$ with $[\Phi(-\cdot)g(y_\cdot)]^\prime_u = \Phi(-u)[g(y)]^\prime_u $. Thus we can rewrite \eqref{variationeq1} as
	\begin{equation}\label{variationeq2}
	\Phi(-v)y_v - \Phi(-u)y_u =  \Phi(-u) f(y_u)(v-u)+ \Big([\Phi(-u)g(y_u)] \otimes x_{u,v} + [\Phi(-\cdot)g(y)]^\prime_u \otimes \X_{u,v} \Big) + \mathcal{O}(|v-u|^{3\alpha}). 
	\end{equation}
	Next, for any fixed $a,t \in R_+$, consider any finite partition $\Pi$ of $[a,t]$ such that $|\Pi| = \max_{[u,v]\in \Pi} |v-u| \ll 1$. It follows from \eqref{variationeq2} that
	\begin{eqnarray}\label{variationeq3}
	\Phi(-t)y_t - \Phi(-a)y_a &=& \sum_{[u,v]\in \Pi} \Phi(-u) f(y_u)(v-u) \\
	&& + \sum_{[u,v]\in \Pi} \Big([\Phi(-u)g(y_u)] \otimes x_{u,v} + [\Phi(-\cdot)g(y)]^\prime_u \otimes \X_{u,v} \Big) + \mathcal{O}(|\Pi|^{3\alpha-1}). \notag
	\end{eqnarray}
Let $|\Pi|$ tend to zero, the first sum in the right hand side of \eqref{variationeq3} converges to $\int_a^t \Phi(-s)f(y_s)ds$, the second sum converges to the rough integral $\int_a^t\Phi (-s)g(y_s) d x_s$, while the residual term $\mathcal{O}(|\Pi|^{3\alpha-1})$ converges to zero. Finally, by multiplying both sides by $\Phi(t)$ and using the semigroup propertiy, we obtain \eqref{variation}.

\end{proof}	
By the same arguments, one can show that.
\begin{corollary}\label{RDEvariationdiff}
		Consider two solutions $y_t(\bx,y_a)$ and $\ty_t(\bx,\ty_a)$ and their difference $z_t = \ty_t -y_t$ satisfies \eqref{RDEdiff}, as presented in Theorem \ref{RDEdifference}. Then $z_t$ also satisfies the variation of constants formula
			\begin{equation}\label{variationDiff}
		z_t =\Phi (t-a)z_a + \int_a^t\Phi (t-s)[f(y_s+z_s)-f(y_s)] ds + \int_a^t\Phi (t-s)[g(y_s+z_s)-g(y_s)]d x_s,\quad \forall t\geq a.
		\end{equation} 
		
\end{corollary}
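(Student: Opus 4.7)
The idea is simply to transcribe the proof of the preceding Lemma to the equation \eqref{RDEdiff} satisfied by $z_t$, so the plan is to mimic that argument line by line with the obvious substitutions. First I would introduce the auxiliary process $w_u:=\Phi(-u)z_u$ and perform the same Taylor-type expansion, writing
\[
\Phi(-v)z_v-\Phi(-u)z_u=[\Phi(-v)-\Phi(-u)]z_u+\Phi(-u)z_{u,v}+[\Phi(-v)-\Phi(-u)]z_{u,v},
\]
and substituting the local increment of $z$ from the integral equation \eqref{RDEdiff}. The linear $ds$-part contributes $\Phi(-u)[Ay_u+f(\tilde y_u)-f(y_u)-Ay_u](v-u)$, where the $Ay_u$ terms cancel against the derivative $A\Phi(-u)z_u$ coming from $[\Phi(-v)-\Phi(-u)]z_u$, leaving $\Phi(-u)[f(y_u+z_u)-f(y_u)](v-u)$.

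The rough part requires noting that $z$ inherits controlled structure from $(y,g(y))$ and $(\tilde y,g(\tilde y))$: by linearity, $(z,z')\in\mathcal D^{2\alpha}_x$ with $z'_u=g(\tilde y_u)-g(y_u)$. Consequently $h_u:=g(\tilde y_u)-g(y_u)=g(y_u+z_u)-g(y_u)$ is itself controlled by $x$ with Gubinelli derivative $h'_u=Dg(\tilde y_u)g(\tilde y_u)-Dg(y_u)g(y_u)$, and multiplying by the smooth path $\Phi(-\cdot)$ preserves the controlled structure with $[\Phi(-\cdot)h]'_u=\Phi(-u)h'_u$. Thus the sewing lemma estimate \eqref{roughpvar} applies, yielding
\[
\Phi(-v)h_u\otimes x_{u,v}+\Phi(-u)h'_u\otimes\mathbb X_{u,v}+\mathcal O(|v-u|^{3\alpha})
\]
on each subinterval of a fine partition.

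Summing the resulting local expansion over a partition $\Pi$ of $[a,t]$ and passing to the limit $|\Pi|\to 0$, the Lebesgue sum converges to $\int_a^t\Phi(-s)[f(y_s+z_s)-f(y_s)]\,ds$, the rough sum converges by construction to the rough integral $\int_a^t\Phi(-s)[g(y_s+z_s)-g(y_s)]\,dx_s$, and the residual term $\mathcal O(|\Pi|^{3\alpha-1})$ vanishes since $3\alpha>1$. This gives
\[
\Phi(-t)z_t-\Phi(-a)z_a=\int_a^t\Phi(-s)[f(y_s+z_s)-f(y_s)]\,ds+\int_a^t\Phi(-s)[g(y_s+z_s)-g(y_s)]\,dx_s.
\]
Multiplying both sides by $\Phi(t)$ and using the semigroup property $\Phi(t)\Phi(-s)=\Phi(t-s)$ yields \eqref{variationDiff}.

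The only step that is not a literal repetition of the earlier lemma is the verification that $g(\tilde y)-g(y)$, and then $\Phi(-\cdot)[g(\tilde y)-g(y)]$, really form a controlled rough path over $x$ with the Gubinelli derivative claimed above; this is where I would expect the bookkeeping to be most delicate, but it follows from the linearity of the space $\mathcal D^{2\alpha}_x$ together with the standard composition and product rules for controlled rough paths, so no new estimate beyond those already invoked in the proof of \eqref{variation} is required.
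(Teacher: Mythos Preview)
Your proposal is correct and follows exactly the approach the paper intends: the paper simply writes ``By the same arguments, one can show that'' and states the corollary, so reproducing the preceding lemma's argument with $y$ replaced by $z$ and $g(y)$ replaced by $g(\tilde y)-g(y)$ is precisely what is expected. Two small slips to clean up: the drift increment should read $\Phi(-u)[Az_u+f(\tilde y_u)-f(y_u)](v-u)$ (it is the $Az_u$ term, not an $Ay_u$ term, that cancels against $-A\Phi(-u)z_u(v-u)$), and in the displayed rough expansion $\Phi(-v)h_u\otimes x_{u,v}$ should be $\Phi(-u)h_u\otimes x_{u,v}$.
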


We need the following auxiliary results. The first one come from  \cite[Proposition 3.2]{duchong19}.
\begin{proposition}\label{YDEg}
	Given \eqref{estphi1} and \eqref{estphi2}, the following estimate holds: for any $0\leq a<b\leq c$
	\begin{equation}\label{int2}
	 \Big\|\int_a^{b}\Phi (c-s)g( y_s) d  x_s\Big\|
	 \leq e^{-\lambda_A(c-b)}  \kappa(\bx,[a,b]) \Big( \frac{\|g(0)\|}{C_g} +\|y_a\| +\ltn y,R \rtn_{\tp,[a,b]}\Big) ,
	\end{equation}
where 
\begin{eqnarray}
\kappa(\bx,[a,b]):= 2C_p C_A[1+\|A\|(b-a)] \Big\{C_g^2 \ltn \bx \rtn^2_{\tp,[a,b]} \vee C_g \ltn \bx \rtn_{\tp,[a,b]}\Big\}. \label{kappa2}
\end{eqnarray}	
\end{proposition}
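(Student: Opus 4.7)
The plan is to view $H_s := \Phi(c-s)g(y_s)$ as a controlled rough path on $[a,b]$ and invoke the sewing bound \eqref{roughpvar}, then control each of the four resulting contributions through the semigroup estimates \eqref{estphi1}--\eqref{estphi2}. Since $y$ is controlled by $x$ with Gubinelli derivative $y'_s = g(y_s)$, the chain rule for controlled paths gives that $g(y)$ is again controlled, with derivative $Dg(y)g(y)$ and remainder $R^{g(y)}_{s,t} = g(y_t)-g(y_s)-Dg(y_s)g(y_s)x_{s,t}$. A direct expansion of $H_{s,t}$ then exhibits $(H,H')$ with $H'_s := \Phi(c-s)Dg(y_s)g(y_s)$ as controlled by $x$, with remainder
\[
R^H_{s,t} = \Phi(c-s)R^{g(y)}_{s,t} + [\Phi(c-t)-\Phi(c-s)]\,g(y_t).
\]

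Applying \eqref{roughpvar} on $[a,b]$ together with the triangle inequality yields
\[
\Bigl\|\int_a^b H_s\,dx_s\Bigr\| \leq \|H_a\|\,\|x_{a,b}\| + \|H'_a\|\,\|\X_{a,b}\| + C_p\bigl(\ltn x\rtn_{\tp}\ltn R^H\rtn_{\tq} + \ltn H'\rtn_{\tp}\ltn \X\rtn_{\tq}\bigr).
\]
For $s \in [a,b]$, the semigroup estimates \eqref{estphi1}--\eqref{estphi2} give $\|\Phi(c-s)\|\leq C_Ae^{-\lambda_A(c-b)}$ and $\ltn \Phi(c-\cdot)\rtn_{\tp,[a,b]}\leq \|A\|C_Ae^{-\lambda_A(c-b)}(b-a)$. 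Combined with $\|g(y_a)\|\leq\|g(0)\|+C_g\|y_a\|$, $\|Dg\|_\infty \leq C_g$, $\|x_{a,b}\|\leq \ltn\bx\rtn_{\tp}$, $\|\X_{a,b}\|$ and $\ltn\X\rtn_{\tq}\leq \ltn\bx\rtn_{\tp}^{2}$ (using $q=p/2$), and the standard composition bounds for $R^{g(y)}$ and $Dg(y)g(y)$ in terms of $\ltn y\rtn_{\tp}$, $\ltn R^y\rtn_\tq$ and $C_g$ (via a second-order Taylor expansion of $g\in C^3_b$, or directly for the linear case), each of the four contributions is bounded by
\[
C_Ae^{-\lambda_A(c-b)}[1+\|A\|(b-a)]\cdot \bigl(C_g\ltn\bx\rtn_\tp \text{ or } C_g^{2}\ltn\bx\rtn_\tp^{2}\bigr)\cdot \bigl(\|g(0)\|/C_g+\|y_a\|+\ltn y,R\rtn_{\tp,[a,b]}\bigr),
\]
with one power of $\ltn\bx\rtn_\tp$ arising from the $x$-pieces and two powers from the $\X$-pieces.

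I would then collect the four pieces, factor out the common prefactor $2C_pC_A[1+\|A\|(b-a)]e^{-\lambda_A(c-b)}$ together with the scalar factor $\|g(0)\|/C_g+\|y_a\|+\ltn y,R\rtn_{\tp,[a,b]}$, and dominate the remaining combination $C_g\ltn\bx\rtn_\tp+C_g^{2}\ltn\bx\rtn_\tp^{2}$ by $2\max\{C_g\ltn\bx\rtn_\tp,\,C_g^{2}\ltn\bx\rtn_\tp^{2}\}$, thus reproducing exactly the form of $\kappa(\bx,[a,b])$.

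The main obstacle is the careful tracking of the composition estimates for the controlled path $(g(y),Dg(y)g(y))$: in particular, the quadratic term $\ltn y\rtn_{\tp}^{2}$ arising in $\ltn R^{g(y)}\rtn_\tq$ from the second-order Taylor remainder of $g$ must, after multiplication by $\ltn\bx\rtn_\tp$, still fold linearly into $\ltn y,R\rtn_{\tp,[a,b]}$. One uses the controlled-path identity $y_{s,t}=y'_s x_{s,t}+R^y_{s,t}$ to trade one factor of $\ltn y\rtn_\tp$ for an extra power of $\ltn\bx\rtn_\tp$, so that the mixed term slots into the $C_g^{2}\ltn\bx\rtn_{\tp}^{2}$ component of $\kappa$ rather than generating a superlinear dependence on $\ltn y,R\rtn_{\tp,[a,b]}$.
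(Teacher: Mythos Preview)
Your proposal is correct and follows essentially the same route as the paper: identify $(H,H')=(\Phi(c-\cdot)g(y_\cdot),\,\Phi(c-\cdot)Dg(y_\cdot)g(y_\cdot))$ as a controlled path, compute $R^H$ exactly as you wrote, apply \eqref{roughpvar}, and feed in \eqref{estphi1}--\eqref{estphi2}; the paper also treats the bounded $g\in C^3_b$ case and the linear case $g(y)=Cy+g(0)$ separately. The one small difference is bookkeeping in the composition estimate: the paper expands $g(y_t)-g(y_s)$ via $\int_0^1 Dg(y_s+\eta y_{s,t})\,y_{s,t}\,d\eta$ and substitutes $y_{s,t}=y'_sx_{s,t}+R^y_{s,t}$ \emph{before} bounding, obtaining directly $\|R^{g(y)}_{s,t}\|\leq C_g\|R^y_{s,t}\|+\tfrac12 C_g^2\|y_{s,t}\|\|x_{s,t}\|$ (so the cross term $\ltn x\rtn_{\tp}\ltn y\rtn_{\tp}$ appears immediately rather than $\ltn y\rtn_{\tp}^2$), whereas you anticipate a quadratic $\ltn y\rtn_{\tp}^2$ and then trade one factor via the controlled identity---both arrive at the same place.
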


\begin{proof}
{\bf Case 1}. We first prove \eqref{int2} for the case $g \in C^3_b$. Observe that $y$ is controlled by $x$ with $y^\prime = g(y)$. Since
\begin{eqnarray*}
	&& g(y_t) - g(y_s) \\
	&=& \int_0^1 Dg(y_s + \eta y_{s,t}) y_{s,t}d\eta \\
	&=& D_g(y_s) y^\prime_s \otimes  x_{s,t} + \int_0^1 Dg(y_s + \eta y_{s,t}) R^y_{s,t}d\eta  + \int_0^1 [Dg(y_s + \eta y_{s,t}) - Dg(y_s)] y^\prime_{s,t} \otimes x_{s,t}d\eta,
\end{eqnarray*}
it easy to show that $[g(y)]^\prime_s = Dg(y_s)g(y_s)$, where we use \eqref{gcond} to estimate
\begin{eqnarray*}
	\|R^{g(y)}_{s,t}\| &\leq& \int_0^1 \|Dg(y_s + \eta y_{s,t})\| \|R^y_{s,t}\|d\eta + \int_0^1 \|Dg(y_s + \eta y_{s,t}) - Dg(y_s)\| \|g(y_s)\| \|x_{s,t}\|d\eta \\
	&\leq& C_g \|R^y_{s,t}\| + \frac{1}{2} C_g^2 \|y_{s,t}\| \|x_{s,t}\|.
\end{eqnarray*}
This together with H\"older inequality yields
\allowdisplaybreaks
\begin{eqnarray}\label{Exist2}
\ltn [g(y)]^\prime \rtn_{\tp,[s,t]} &\leq& 2 C_g^2 \ltn y \rtn_{\tp,[s,t]}, \quad \|[g(y)]^\prime\|_{\infty,[s,t]} \leq C_g^2, \notag\\
\ltn R^{g(y)} \rtn_{\tq,[s,t]} &\leq& C_g \ltn R^y \rtn_{\tq,[s,t]} + \frac{1}{2}C_g^2 \ltn x \rtn_{\tp,[s,t]} \ltn y \rtn_{\tp,[s,t]}.
\end{eqnarray}
Now because
	\[
\Phi(c-t)g(y_t) - \Phi(c-s)g(y_s) = [\Phi(c-t) -\Phi(c-s)]g(y_t) + \Phi(c-s) \Big( [g(y)]^\prime_s \otimes x_{s,t} + R^{g(y)}_{s,t}\Big),
\]
it follows that
\begin{eqnarray*}
	[\Phi(c-\cdot)g(y_\cdot)]^\prime_s &=& \Phi(c-s)[g(y)]^\prime_s = \Phi(c-s) Dg(y_s)g(y_s), \\
	\Big\|R^{\Phi(c-\cdot)g(y_\cdot)}_{s,t}\Big\| &\leq& \|\Phi(c-s)R^{g(y)}_{s,t}\| + \|\Phi(c-t) - \Phi(c-s)\| \|g(y_t)\|.
\end{eqnarray*}	
A direct computation shows that
\begin{eqnarray*}
\ltn [\Phi(c-\cdot)g(y_\cdot)]^\prime \rtn_{\tp,[a,b]} &\leq& 2 C_g^2 \|\Phi(c-\cdot)\|_{\infty,[a,b]} \ltn y \rtn_{\tp,[a,b]} + C_g^2 \ltn \Phi(c-\cdot) \rtn_{\tp,[a,b]}\\
\ltn R^{\Phi(c-\cdot)g(y_\cdot)} \rtn_{\tq,[a,b]} &\leq& \|\Phi(c-b)\| \ltn R^{g(y)} \rtn_{\tq,[a,b]} + C_g \ltn \Phi(c-\cdot) \rtn_{\tq,[a,b]}.
\end{eqnarray*}
Using \eqref{roughpvar} and \eqref{estphi1}, \eqref{estphi2}, we can now estimate
\allowdisplaybreaks
\begin{eqnarray*}
&&\Big\|\int_a^b\Phi (c-s)g(y_s) d  x_s\Big\|\notag\\
&\leq & \|\Phi(c-a)g(y_a)\| \|x_{a,b}\| + \|[\Phi(c-\cdot)g(y_\cdot)]^\prime_a \| \|\X_{a,b}\| \\
&&+ C_p \Big\{\ltn x \rtn_{\tp,[a,b]} \ltn R^{\Phi(c-\cdot)g(y_\cdot)} \rtn_{\tq,[a,b]} + \ltn \X \rtn_{\tq,[a,b]} \ltn [\Phi(c-\cdot)g(y_\cdot)]^\prime \rtn_{\tp,[a,b]}\Big\}\\
&\leq& C_A C_g e^{-\lambda_A(c-a)}\ltn x\rtn_{\tp,[a,b]} + C_A C_g^2 e^{-\lambda_A(c-a)}\ltn \X\rtn_{\tq,[a,b]}\\
&&+ C_p \ltn \X \rtn_{\tq,[a,b]} \Big[2C_AC_g^2 e^{-\lambda_A(c-b)}\ltn y \rtn_{\tp,[a,b]} + C_A C_g^2 \|A\|e^{-\lambda_A(c-b)}(b-a) \Big]\\
&&+ C_p \ltn x\rtn_{\tp,[a,b]} \Big\{ C_AC_g \|A\|e^{-\lambda_A(c-b)}(b-a) \\
&&+ C_A e^{-\lambda_A(c-b)} \Big( C_g \ltn R^y \rtn_{\tq,[a,b]} + \frac{1}{2}C_g^2 \ltn x \rtn_{\tp,[a,b]} \ltn y \rtn_{\tp,[a,b]} \Big) \Big\}\\
&\leq& C_A [1+C_p\|A\|(b-a)] e^{-\lambda_A(c-b)} \Big(C_g \ltn x \rtn_{\tp,[a,b]} + C_g^2 \ltn \X \rtn_{\tq,[a,b]} \Big)\\
&&+ C_p C_A e^{-\lambda_A(c-b)} \Big\{\Big[2C_g^2 \ltn \X \rtn_{\tq,[a,b]} + \frac{1}{2}C_g^2 \ltn x \rtn_{\tp,[a,b]}^2\Big] \vee C_g \ltn x \rtn_{\tp,[a,b]}\Big\} \ltn y,R \rtn_{\tp,[a,b]}.
\end{eqnarray*}
which, together proves \eqref{int2}.\\

{\bf Case 2}. Next, consider the case $g = Cy + g(0)$ with $\|C\| \leq C_g$. Then similar estimates show that 
\begin{eqnarray*}
\|R^{g(y)}_{s,t}\| &\leq& \|C\| \|R^y_{s,t}\|;\\
\ltn [\Phi(c-\cdot)g(y_\cdot)]^\prime \rtn_{\tp,[a,b]} &\leq&  \|C\|^2 \|\Phi(c-\cdot)\|_{\infty,[a,b]} \ltn y \rtn_{\tp,[a,b]}\\
&& + \ltn \Phi(c-\cdot) \rtn_{\tp,[a,b]}  \|C\| (\|C\| \|y\|_{\tp,[a,b]} + \|g(0)\|);\\
\ltn R^{\Phi(c-\cdot)g(y_\cdot)} \rtn_{\tq,[a,b]} &\leq& \|\Phi(c-b)\| \ltn R^{g(y)} \rtn_{\tq,[a,b]} \\
&&+  \ltn \Phi(c-\cdot) \rtn_{\tq,[a,b]} (\|C\| \|y\|_{\tp,[a,b]} + \|g(0)\|).
\end{eqnarray*}
As a result,
\begin{eqnarray*}
	&&\Big\|\int_a^b\Phi (c-s)g(y_s) d  x_s\Big\|\notag\\
	&\leq & \|\Phi(c-a)g(y_a)\| \|x_{a,b}\| + \|[\Phi(c-\cdot)g(y_\cdot)]^\prime_a \| \|\X_{a,b}\| \\
	&&+ C_p \Big\{\ltn x \rtn_{\tp,[a,b]} \ltn R^{\Phi(c-\cdot)g(y_\cdot)} \rtn_{\tq,[a,b]} + \ltn \X \rtn_{\tq,[a,b]} \ltn [\Phi(c-\cdot)g(y_\cdot)]^\prime \rtn_{\tp,[a,b]}\Big\}\\
	&\leq& C_A e^{-\lambda_A(c-a)} (C_g \|y_a\| + \|g(0)\|) \Big(\ltn x\rtn_{\tp,[a,b]} + C_g \ltn \X\rtn_{\tq,[a,b]}\Big)\\
	&& + C_p C_A e^{-\lambda_A(c-b)} \ltn \X \rtn_{\tq,[a,b]} \Big[C_g^2 \ltn y \rtn_{\tp,[a,b]} + C_g \|A\|(b-a)(C_g \|y\|_{\tp,[a,b]}+ \|g(0)\|) \Big]\\
	&& + C_p C_A e^{-\lambda_A(c-b)} \ltn x \rtn_{\tp,[a,b]} \Big[C_g \ltn R^y \rtn_{\tq,[a,b]} +  \|A\|(b-a)(C_g \|y\|_{\tp,[a,b]}+ \|g(0)\|)  \Big]\\
	&\leq& C_A [1+ C_p\|A\|(b-a)]e^{-\lambda_A(c-b)} (\|y_a\| + \frac{\|g(0)\|}{C_g}) \Big(C_g\ltn x\rtn_{\tp,[a,b]} + C_g^2 \ltn \X\rtn_{\tq,[a,b]} \Big)\\
	&& + C_p C_A e^{-\lambda_A(c-b)} \Big(C_g^2\ltn \X \rtn_{\tq,[a,b]} \vee C_g  \ltn x \rtn_{\tp,[a,b]} \Big) \ltn y,R \rtn_{\tp,[a,b]}
\end{eqnarray*}
which derives \eqref{int2}.

\end{proof}	

\begin{proposition}\label{RDEgdiff}
	 The following estimate holds: for any $0\leq a<b\leq c$
	\begin{equation}\label{RDEvardiff1}
	\Big\|\int_a^{b}\Phi (c-s)[g( y_s+z_s)-g(y_s)] d  x_s\Big\|
	\leq e^{-\lambda_A(c-b)}  \kappa(\bx,[a,b])\Lambda(\bx,[a,b]) \Big(\|z_a\| + \ltn z,R \rtn_{\tp,[a,b]}\Big) ,
	\end{equation}
	where $\kappa$ and $\Lambda$ are given by \eqref{kappa2} and \eqref{Lambda} respectively.
\end{proposition}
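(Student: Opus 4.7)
The plan is to imitate the proof of Proposition \ref{YDEg} with the integrand $g(y_\cdot)$ replaced by the difference $h_s := g(\ty_s) - g(y_s) = g(y_s + z_s) - g(y_s)$, while keeping careful track of how each seminorm of the controlled rough path $h$ picks up an explicit factor of $\|z_a\| + \ltn z, R \rtn_{\tp,[a,b]}$. The prefactors that do not involve $z$ will then collect precisely into $\kappa(\bx,[a,b])\Lambda(\bx,[a,b])$, with $\Lambda$ arising from the uniform bounds of Theorem \ref{RDE} applied simultaneously to $y$ and $\ty$.

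First I would verify that $h$ is controlled by $x$. Since $y$ and $\ty$ both solve \eqref{fSDE0}, they are controlled by $x$ with Gubinelli derivatives $g(y)$ and $g(\ty)$, and as shown in Case 1 of Proposition \ref{YDEg}, $g(y)$ and $g(\ty)$ are themselves controlled rough paths. Subtracting, $h$ is controlled by $x$ with
\[
h'_s = Dg(\ty_s) g(\ty_s) - Dg(y_s) g(y_s) = \bigl[ Dg(\ty_s) - Dg(y_s) \bigr] g(\ty_s) + Dg(y_s) h_s,
\]
which by \eqref{gcond} is uniformly of order $C_g^2 \|z\|_{\infty,[a,b]}$. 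Then I would expand
\[
h_{s,t} = \int_0^1 Dg\bigl(\ty_s + \eta \ty_{s,t}\bigr)\, z_{s,t}\, d\eta + \int_0^1 \bigl[Dg\bigl(\ty_s + \eta \ty_{s,t}\bigr) - Dg\bigl(y_s + \eta y_{s,t}\bigr)\bigr]\, y_{s,t}\, d\eta,
\]
subtract off the leading term $h'_s \otimes x_{s,t}$, and Taylor-expand $Dg$ once more to obtain a remainder $R^h_{s,t}$ whose $\tq$-variation is linear in $\ltn R^z\rtn_{\tq}$ and $\ltn z\rtn_{\tp}$, with prefactors of the form $C_g + C_g^2(\ltn y,R\rtn_{\tp} + \ltn \ty,R\rtn_{\tp})$ absorbable into $\Lambda(\bx,[a,b])$.

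Once $(h, h')$ is controlled, I would multiply by the semigroup $\Phi(c-\cdot)$, exactly as in Proposition \ref{YDEg}, to obtain another controlled rough path with Gubinelli derivative $\Phi(c-s) h'_s$ and with remainder bounds inheriting, from \eqref{estphi1}--\eqref{estphi2}, the exponential decay $e^{-\lambda_A(c-b)}$ together with the extra factor $1 + \|A\|(b-a)$. Applying the sewing-lemma estimate \eqref{roughpvar} to $\int_a^b \Phi(c-s) h_s\, d x_s$ yields six summands, each of which factors as
\[
e^{-\lambda_A(c-b)} \cdot \bigl(\text{a contribution to } \kappa(\bx,[a,b])\bigr) \cdot \bigl(\text{a factor bounded by } \Lambda(\bx,[a,b])\bigr) \cdot \bigl(\|z_a\| + \ltn z, R\rtn_{\tp,[a,b]}\bigr),
\]
and collecting them produces \eqref{RDEvardiff1}. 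The analogous computation goes through in the linear case $g(y) = Cy + g(0)$, since then the $Dg$-difference term vanishes identically and the bookkeeping only simplifies.

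The main obstacle is the bookkeeping in the controlled-rough-path step: one has to verify that every $y$- or $\ty$-dependent factor appearing in the two Taylor expansions of $Dg$ can be absorbed into $\Lambda(\bx,[a,b])$, which by Theorem \ref{RDE} simultaneously bounds $\|y_a\|+\ltn y,R\rtn_{\tp,[a,b]}$ and $\|\ty_a\|+\ltn \ty,R\rtn_{\tp,[a,b]}$. Once this is in hand, the sewing-lemma step is a direct transcription of the argument from Proposition \ref{YDEg}, with every occurrence of $\|g(0)\|/C_g + \|y_a\| + \ltn y,R\rtn_{\tp}$ on the right-hand side there replaced by the corresponding $z$-seminorm $\|z_a\| + \ltn z,R\rtn_{\tp}$ here.
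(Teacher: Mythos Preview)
Your proposal is correct and follows essentially the same route as the paper. The paper's proof simply cites \cite[Proposition 3.3]{duc20} for the controlled-rough-path structure and seminorm bounds of $g(\ty)-g(y)$ (which you sketch by hand via the two Taylor expansions of $Dg$), then multiplies by $\Phi(c-\cdot)$ and says ``the rest is then similar to Proposition \ref{YDEg} and will be omitted'' --- precisely the bookkeeping you describe.
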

\begin{proof}
		By \cite[Proposition 3.3]{duc20}, $g(\ty) - g(y)$ is controlled by $x$ with 	
		\begin{eqnarray*}
		[g(\ty) - g(y)]^\prime_s &=& Dg(\ty_s)g(\ty_s) - Dg(y_s)g(y_s); \\
		\ltn [g(\ty) - g(y)]^\prime\rtn_{\tp,[s,t]} 
		&\leq& 2C_g^2 \Big(\ltn z \rtn_{\tp,[s,t]} + \|z\|_{\infty,[s,t]} \ltn y \rtn_{\tp,[s,t]}\Big);\\
		\ltn R^{g(\ty)-g(y)}\rtn_{\tq,[s,t]} &\leq& C_g \ltn R^{z} \rtn_{\tq,[s,t]} + C_g \|z\|_{\infty,[s,t]} \ltn R^{y} \rtn_{\tq,[s,t]} \\
		&&+ \frac{1}{2}C_g^2 \ltn x \rtn_{\tp,[s,t]} \Big[\ltn z \rtn_{\tp,[s,t]} + \|z\|_{\infty} \Big(\ltn \ty \rtn_{\tp,[s,t]}+\ltn y \rtn_{\tp,[s,t]}\Big)\Big].
	\end{eqnarray*}
Similarly, it is easy to show that $\Phi(c-\cdot)\Big(g(y_\cdot)-g(y)\Big)$ is controlled by $x$ with 
\begin{eqnarray*}
\Big[\Phi(c-\cdot)\Big(g(y_\cdot)-g(y)\Big)\Big]^\prime_s &=& \Phi(c-s)\Big[g(y)-g(y)\Big]^\prime_s;\\
\ltn \Big[\Phi(c-\cdot)\Big(g(y_\cdot)-g(y)\Big)\Big]^\prime \rtn_{\tp,[s,t]} &\leq& \|\Phi(c-\cdot)\|_\infty 	\ltn [g(\ty) - g(y)]^\prime\rtn_{\tp,[s,t]} \\
&& + \ltn \Phi(c-\cdot) \rtn_{\tp,[s,t]} \| [g(\ty) - g(y)]^\prime \|_{\infty,[s,t]}; \\
\ltn \|R^{\Phi(c-\cdot)[g(\ty_\cdot)-g(y)]}\rtn_{\tq,[s,t]} &\leq& C_g \ltn \Phi(c-\cdot) \rtn_{\tq,[s,t]} \|z\|_{\infty,[s,t]} \\
&&+ \| \Phi(c-\cdot) \|_{\infty,[s,t] } 	\ltn R^{g(\ty)-g(y)}\rtn_{\tq,[s,t]}.
\end{eqnarray*} 
The rest is then similar to Proposition \ref{YDEg} and will be omitted.

\end{proof}

The following lemmas are followed directly from \cite[Lemma 3.3]{duchong19} using Proposition \ref{YDEg} and Proposition \ref{RDEgdiff} (the proof is omitted due to similarity).
\begin{lemma}\label{ytest}
	Assume that $y_t$ satisfies \eqref{variation}. Then for any $n\geq 0$, 
	\begin{eqnarray}\label{yt}
		\|y_t\|e^{(\lambda_A-L_f) t}
	&\leq&  C_A \|y_0\| + \frac{C_A}{\lambda_A - L_f} \|f(0)\| \Big(e^{(\lambda_A - L_f)t}-1\Big) \\
	&&+e^{\lambda_A}\sum_{k=0}^{n} e^{(\lambda_A - L_f)k}\kappa(\bx,\Delta_k)  \Big[ \|y_k\| + \frac{\|g(0)\|}{C_g}+ \ltn y,R\rtn_{p{\rm -var},\Delta_k}\Big], \quad \forall t \in \Delta_n\notag
	\end{eqnarray}
where $\Delta_k:=[k,k+1]$, $L_f := C_A C_f$. 
\end{lemma}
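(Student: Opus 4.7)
The strategy is to take the variation of constants formula \eqref{variation} with $a=0$, estimate each of the three terms on the right-hand side separately, and then resolve the implicit dependence on $\|y_s\|$ via a linear Gronwall argument tailored to the exponential kernel $L_f e^{-\lambda_A(t-s)}$.

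First, I would take norms in \eqref{variation}, apply the semigroup bound \eqref{estphi1}, and use the Lipschitz estimate $\|f(y)\| \le \|f(0)\| + C_f\|y\|$ to obtain
\begin{equation*}
\|y_t\| \le C_A e^{-\lambda_A t}\|y_0\| + C_A\|f(0)\|\,\frac{1 - e^{-\lambda_A t}}{\lambda_A} + L_f\int_0^t e^{-\lambda_A(t-s)}\|y_s\|\,ds + \mathcal{R}(t),
\end{equation*}
where $\mathcal{R}(t):=\Big\|\int_0^t \Phi(t-s) g(y_s)\,dx_s\Big\|$ and $L_f = C_A C_f$.

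Second, I would split the rough integral along the unit intervals $\Delta_k=[k,k+1]$: for fixed $t \in \Delta_n$ write $[0,t] = \bigcup_{k=0}^{n-1}\Delta_k \cup [n,t]$ and apply Proposition \ref{YDEg} with $c=t$ to each piece. Since $\kappa(\bx,\cdot)$ and $\ltn y,R \rtn_{\tp,\cdot}$ are nondecreasing in the interval, the contribution from $[n,t] \subseteq \Delta_n$ is controlled by the corresponding quantity on $\Delta_n$, and the exponential factor $e^{-\lambda_A(t-(k+1))}$ supplied by \eqref{int2} can be rewritten as $e^{\lambda_A(n+1-t)} e^{-\lambda_A(n-k)} \le e^{\lambda_A} e^{-\lambda_A(n-k)}$ for $t \in \Delta_n$.

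Third, I would apply linear Gronwall to absorb the implicit term $L_f \int_0^t e^{-\lambda_A(t-s)}\|y_s\|\,ds$. The substitution $w(t) = e^{\lambda_A t}\|y_t\|$ turns the kernel into a constant, so standard Gronwall applies, and multiplying through by $e^{-L_f t}$ yields
\begin{equation*}
\|y_t\| e^{(\lambda_A - L_f) t} \le C_A\|y_0\| + \frac{C_A\|f(0)\|}{\lambda_A - L_f}\bigl(e^{(\lambda_A - L_f) t} - 1\bigr) + \widetilde{\mathcal{R}}(t),
\end{equation*}
where $\widetilde{\mathcal{R}}(t)$ is the Gronwall-processed rough sum. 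Each of its summands carries an exponential weight $e^{(\lambda_A - L_f)(k+1)} \le e^{\lambda_A}\cdot e^{(\lambda_A - L_f)k}$, which combined with the bound from the previous step reproduces exactly the right-hand side of \eqref{yt}.

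The main obstacle is the careful bookkeeping through the Gronwall step: one must verify that the post-processing of each summand in the rough decomposition produces a weight of the form $e^{(\lambda_A - L_f)k}$ times a uniformly bounded constant, rather than introducing a growing factor. This reduces to estimating the integral $L_f\int_k^{k+1} e^{(L_f - \lambda_A)(t-s)}\,ds$ on each $\Delta_k$, which is uniformly bounded whenever $\lambda_A > L_f$, i.e.\ under the dissipativity hypothesis on $A$ relative to the Lipschitz constant of $f$ built into the constant $L_f = C_A C_f$.
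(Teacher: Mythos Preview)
Your proposal is correct and follows exactly the route the paper has in mind: the paper omits the proof, pointing to \cite[Lemma~3.3]{duchong19}, whose argument is precisely the combination of the variation-of-constants formula, the interval-wise application of Proposition~\ref{YDEg}, and a continuous Gronwall step after the substitution $w(t)=e^{\lambda_A t}\|y_t\|$. The only imprecision is in your final paragraph: the relevant post-Gronwall contribution of the $k$-th summand is $c_k\,L_f\!\int_k^t e^{L_f(t-s)}\,ds = c_k\big(e^{L_f(t-k)}-1\big)$ (with $c_k=e^{\lambda_A(k+1)}\kappa(\bx,\Delta_k)[\ldots]$), which after multiplying by $e^{-L_f t}$ gives the exact weight $e^{\lambda_A}e^{(\lambda_A-L_f)k}$ with no appeal to $\lambda_A>L_f$; that hypothesis is needed only for the $\|f(0)\|$ term.
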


\begin{lemma}\label{ztest}
	Assume that $z_t$ satisfies \eqref{variationDiff}. Then for any $n\geq 1$, 
	\begin{equation}\label{zt}
	\|z_n\|e^{(\lambda_A-L_f) n}
	\leq  C_A \|z_0\| + e^{\lambda_A}\sum_{k=0}^{n-1} e^{(\lambda_A - L_f)k}\kappa(\bx,\Delta_k) \Lambda(\bx,\Delta_k) \Big[ \|z_k\| + \ltn z,R\rtn_{p{\rm -var},\Delta_k}\Big].
	\end{equation}

\end{lemma}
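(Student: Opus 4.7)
The plan is to follow the argument of Lemma \ref{ytest} almost verbatim, replacing the variation formula \eqref{variation} with its difference analogue \eqref{variationDiff} from Corollary \ref{RDEvariationdiff}, and replacing the rough-integral estimate of Proposition \ref{YDEg} with its difference version from Proposition \ref{RDEgdiff}. Two simplifications occur relative to Lemma \ref{ytest}: the affine source terms $\|f(0)\|$ and $\|g(0)\|/C_g$ vanish because $z$ satisfies an equation whose right-hand side is made of differences of nonlinearities; in return, every occurrence of $\kappa(\bx,\Delta_k)$ in the rough-integral bound is upgraded to $\kappa(\bx,\Delta_k)\Lambda(\bx,\Delta_k)$, reflecting the solution-dependent factor from Proposition \ref{RDEgdiff}.

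Concretely, the starting identity is
\[ z_n = \Phi(n)z_0 + \int_0^n \Phi(n-s)\bigl[f(y_s+z_s)-f(y_s)\bigr]\,ds + \sum_{k=0}^{n-1}\int_k^{k+1}\Phi(n-s)\bigl[g(y_s+z_s)-g(y_s)\bigr]\,dx_s. \]
Using $\|\Phi(t)\|\leq C_A e^{-\lambda_A t}$ from \eqref{estphi1}, the first term contributes $C_A e^{-\lambda_A n}\|z_0\|$; the Lipschitz bound $\|f(y+z)-f(y)\|\leq C_f\|z\|$ controls the drift integral by $L_f\int_0^n e^{-\lambda_A(n-s)}\|z_s\|ds$ with $L_f = C_A C_f$; and applying Proposition \ref{RDEgdiff} on each $\Delta_k$ with $a=k,\;b=k+1,\;c=n$ gives the bound $e^{-\lambda_A(n-k-1)}\kappa(\bx,\Delta_k)\Lambda(\bx,\Delta_k)\bigl[\|z_k\|+\ltn z,R\rtn_{\tp,\Delta_k}\bigr]$ on each rough-integral piece.

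To conclude, I would carry out the same exponentially-weighted Gronwall step that produces Lemma \ref{ytest}. Applying the local Gronwall inequality on each unit interval $\Delta_k$ to the function $s\mapsto \|z_s\|e^{\lambda_A s}$ absorbs the drift contribution into a multiplicative factor $e^{L_f}$ per step, which combined with the semigroup weight $e^{-\lambda_A}$ effectively shifts the exponential rate per unit interval from $e^{-\lambda_A}$ to $e^{-(\lambda_A - L_f)}$. Iterating this step across $k=0,\dots,n-1$ and rearranging yields the claimed weight $e^{(\lambda_A - L_f)n}$ on $\|z_n\|$, together with the exponentially-tempered sum $e^{\lambda_A}\sum_{k=0}^{n-1} e^{(\lambda_A - L_f)k}\kappa(\bx,\Delta_k)\Lambda(\bx,\Delta_k)\bigl[\|z_k\|+\ltn z,R\rtn_{\tp,\Delta_k}\bigr]$, completing the derivation of \eqref{zt}.

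The main technical point is the local-Gronwall calibration that sharpens the naive global weight $e^{\lambda_A k}$ to $e^{(\lambda_A - L_f)k}$ inside the sum; this is achieved by applying the Gronwall step on each $\Delta_k$ separately rather than once globally over $[0,n]$, and is carried out in detail in the proof of Lemma \ref{ytest} from which the present lemma is a direct analogue. Everything else is a mechanical substitution, which is why the author suppresses the proof.
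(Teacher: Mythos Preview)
Your argument is correct and is exactly the approach the paper indicates: the authors state that Lemmas \ref{ytest} and \ref{ztest} follow directly from \cite[Lemma 3.3]{duchong19} using Proposition \ref{YDEg} and Proposition \ref{RDEgdiff}, with the proof omitted due to similarity. You have correctly identified the two structural changes relative to Lemma \ref{ytest}---the vanishing of the affine source terms and the replacement of $\kappa$ by $\kappa\Lambda$ via Proposition \ref{RDEgdiff}---and the interval-wise Gronwall step that calibrates the exponential weight.
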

We remind the reader of the so-called discrete Gronwall lemma \cite[Appendix 4.2]{duchong19}.
\begin{lemma}[Discrete Gronwall Lemma]\label{gronwall}
	Let $a$ be a non-negative constant and $u_n, \alpha_n,\beta_n$ be  non-negative sequences satisfying 
	$$u_n\leq a + \sum_{k=0}^{n-1} \alpha_ku_k + \sum_{k=0}^{n-1} \beta_k,\;\; \forall n \geq 1$$
	then 
	\begin{equation}\label{estu}
	u_n\leq \max\{a,u_0\}\prod_{k=0}^{n-1} (1+\alpha_k) + \sum_{k=0}^{n-1}\beta_k\prod_{j=k+1}^{n-1}(1+\alpha_j) 
	\end{equation}
	for all $n\geq 1$.
\end{lemma}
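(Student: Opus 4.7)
The plan is to reduce the hypothesis to a linear one-step recursion and then iterate. First I would introduce the partial-sum majorant
\[
S_n := a + \sum_{k=0}^{n-1} \alpha_k u_k + \sum_{k=0}^{n-1}\beta_k, \qquad n \geq 1,
\]
with the convention $S_0 := a$, so that the hypothesis rewrites as $u_n \leq S_n$ for all $n \geq 1$ and the problem reduces to bounding $S_n$. The telescoping identity $S_{n+1} - S_n = \alpha_n u_n + \beta_n$ is immediate from the definition; for $n \geq 1$, substituting $u_n \leq S_n$ converts it into the linear recursion
\[
S_{n+1} \leq (1+\alpha_n)\, S_n + \beta_n, \qquad n \geq 1.
\]

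The case $n=0$ of this recursion requires care, because the hypothesis does not a priori give $u_0 \leq a$; this is precisely where the factor $\max\{a,u_0\}$ in \eqref{estu} has to enter. I would handle it by a direct estimate: $S_1 = a + \alpha_0 u_0 + \beta_0 \leq (1+\alpha_0)\max\{a,u_0\} + \beta_0$, which is exactly \eqref{estu} at $n=1$ under the convention that an empty product equals $1$.

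For $n \geq 1$ the recursion is then iterated by a one-line induction: assuming \eqref{estu} holds at index $n$, multiply the bound for $S_n$ through by $(1+\alpha_n)$, add $\beta_n$, and recognise $\beta_n = \beta_n \prod_{j=n+1}^{n}(1+\alpha_j)$ as the $k=n$ summand of $\sum_{k=0}^{n} \beta_k \prod_{j=k+1}^{n}(1+\alpha_j)$. This produces exactly \eqref{estu} at index $n+1$, and combined with $u_n \leq S_n$ it yields the claim. There is no genuine obstacle in this argument; the only delicate point is bookkeeping of the base case so that $\max\{a,u_0\}$ emerges in place of $a$, and consistent handling of the empty-product convention in the final summation.
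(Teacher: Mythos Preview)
Your argument is correct: defining the partial-sum majorant $S_n$, deriving the one-step recursion $S_{n+1}\leq(1+\alpha_n)S_n+\beta_n$ for $n\geq 1$, handling the base case $S_1\leq(1+\alpha_0)\max\{a,u_0\}+\beta_0$ separately, and then iterating by induction is a clean and complete proof of \eqref{estu}. The bookkeeping of the base case (which is where $\max\{a,u_0\}$ must replace $a$) and of the empty-product convention is handled correctly.

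As for comparison with the paper: this paper does not actually supply a proof of the lemma, but only quotes it from \cite[Appendix 4.2]{duchong19}. So no direct comparison is possible here. Your approach is in any case the standard one for discrete Gronwall-type inequalities and is exactly what one would expect to find in that reference.
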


We are now able to formulate the first main result of the paper.

\begin{theorem}\label{attractor}
	Assume that $A$ has all eigenvalues of negative real parts with $\lambda_A$ satisfying \eqref{estphi1} and \eqref{estphi2}, and $f$ is globally Lipschitz continuous such that $\lambda_A > C_fC_A$. Assume further that the driving path $x$ satisfies \eqref{gamma}. Then under the condition
	\begin{equation}\label{criterion}
	\lambda_A - C_A C_f> C_A (1+\|A\|)e^{\lambda_A+4(\|A\|+C_f)} \Big\{\Big[4C_pC_g\Gamma(p)\Big]^p + \Big[4C_pC_g\Gamma(p)\Big]\Big\},
	\end{equation}
	where $\Gamma(p)$ is given by \eqref{gamma}, the random dynamical system $\varphi$ possesses a pullback attractor $\mathcal{A}(\bx)$. 
	
\end{theorem}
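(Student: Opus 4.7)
The plan is to construct a tempered random pullback absorbing ball $\mathcal{B}(\bx)=\overline{B}(0,\rho(\bx))$; once this is in place, the general result \cite[Theorem 3]{crauelkloeden} together with formula \eqref{at} automatically produces the pullback attractor $\mathcal{A}(\bx)$. Thus I only need to show that, for almost every $\bx$, any tempered initial set $D(\theta_{-t}\bx)$ is mapped by $\varphi(t,\theta_{-t}\bx)$ into a fixed ball of tempered radius $\rho(\bx)$ for all sufficiently large $t$.

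Setting $u_n := \|y_n\|\,e^{(\lambda_A-L_f)n}$ along the pullback trajectory $y_\cdot = \varphi(\cdot,\theta_{-N}\bx)y_0$ with $L_f:=C_AC_f$, I would combine Lemma \ref{ytest} (applied on $[0,N]$ with the shifted driver $\theta_{-N}\bx$) with the one-step $p$-variation estimate \eqref{estx2} from Theorem \ref{RDE} to control $\ltn y,R\rtn_{\tp,\Delta_k}$. Expressing the greedy count $N_{\Delta_k}(\theta_{-N}\bx)$ via \eqref{Nest} and collecting terms produces
\[
u_n \le C_A\|y_0\| + \frac{C_A\|f(0)\|}{\lambda_A-L_f} + \sum_{k=0}^{n-1}\alpha_k\,u_k + \sum_{k=0}^{n-1}\beta_k,
\]
where $\alpha_k=\alpha(\theta_{-N+k}\bx)$ and $\beta_k=\beta(\theta_{-N+k}\bx)$ are stationary random variables built from $\kappa(\cdot,\Delta_0)$ of \eqref{kappa2} and powers of $\ltn\cdot\rtn_{\tp,\Delta_0}$ coming from \eqref{Nest}. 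Applying the discrete Gronwall Lemma \ref{gronwall} then yields
\[
u_n \le M_0(\|y_0\|)\prod_{k=0}^{n-1}(1+\alpha_k) + \sum_{k=0}^{n-1}\beta_k\prod_{j=k+1}^{n-1}(1+\alpha_j).
\]

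Next I would invoke Birkhoff's ergodic theorem: stationarity of $\bX$ and ergodicity of $\theta$ yield $\frac{1}{n}\sum_{k=0}^{n-1}\log(1+\alpha_k)\to \E[\log(1+\alpha_0)]$ almost surely, and the inequality $\log(1+x)\le x$ combined with the moment formula \eqref{gamma} gives
\[
\E\bigl[\log(1+\alpha_0)\bigr] \le C_A(1+\|A\|)\,e^{\lambda_A+4L}\Bigl\{[4C_pC_g\Gamma(p)]^p+[4C_pC_g\Gamma(p)]\Bigr\}.
\]
By hypothesis \eqref{criterion} this upper bound is strictly less than $\lambda_A-L_f$. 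Consequently the Lyapunov exponent of the random product $\prod(1+\alpha_k)$ is strictly smaller than $\lambda_A-L_f$, which forces the initial-data contribution to $\|y_n\|=u_n e^{-(\lambda_A-L_f)n}$ to decay to zero in the pullback limit $N\to\infty$, while $\sum_k\beta_k\prod_j(1+\alpha_j)$ converges a.s.\ (by the same ergodic argument) to a random variable $\rho_0(\bx)$ that is tempered as in \eqref{tempered}. Passing from integer times to continuous $t\in\R_+$ via the one-step sup-norm bound \eqref{estx} from Theorem \ref{RDE} and the subexponential growth of $N_{[n,n+1]}(\theta_{-N}\bx)$ produces the desired tempered absorbing radius $\rho(\bx)$.

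The main technical obstacle is the careful bookkeeping in step two: the greedy count $N_{\Delta_k}(\bx)$ produced by Theorem \ref{RDE} appears both as a multiplicative factor $N^{(p-1)/p}$ and inside the $\|f(0)\|$-type summand, and has to be collapsed through \eqref{Nest} into pure $p$-variation moments $\ltn\bx\rtn^p_{\tp,\Delta_k}$ so that the Birkhoff step is available via $\Gamma(p)$. The two summands of orders $[\cdot]^p$ and $[\cdot]^1$ visible in \eqref{criterion} correspond exactly to the two summands $C_g^2\ltn\bx\rtn^2_{\tp,\Delta_k}$ and $C_g\ltn\bx\rtn_{\tp,\Delta_k}$ inside $\kappa$ from \eqref{kappa2} after this collapse; matching them, together with the constants $C_A$, $1+\|A\|$, $e^{\lambda_A}$ and $e^{4L}$, is the arithmetic heart of the proof.
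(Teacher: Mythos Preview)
Your overall architecture --- Lemma \ref{ytest} combined with \eqref{estx2} to set up a discrete recursion for $u_n=\|y_n\|e^{\lambda n}$, then the discrete Gronwall Lemma \ref{gronwall}, then Birkhoff, then a tempered absorbing ball and \cite[Theorem 3]{crauelkloeden} --- is exactly the paper's route. The place where your argument breaks is the estimate of the Lyapunov exponent of $\prod_k(1+\alpha_k)$.

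You claim that $\log(1+x)\le x$ together with \eqref{gamma} yields
\[
\E\bigl[\log(1+\alpha_0)\bigr] \le C_A(1+\|A\|)\,e^{\lambda_A+4L}\Bigl\{[4C_pC_g\Gamma(p)]^p+[4C_pC_g\Gamma(p)]\Bigr\}.
\]
But $\alpha_0$ is (up to constants) $\kappa(\bx,\Delta_0)\,N_{\Delta_0}^{(p-1)/p}(\bx)$, and in the linear-$g$ case it also carries the factor $e^{\alpha N_{\Delta_0}(\bx)}$ coming from \eqref{estxlin}. After you substitute \eqref{Nest}, $\alpha_0$ contains products like $\ltn\bx\rtn_{\tp,\Delta_0}^{2}\cdot\ltn\bx\rtn_{\tp,\Delta_0}^{p-1}$ and, in the linear case, a factor $\exp\{\alpha(4C_pC_g)^p\ltn\bx\rtn_{\tp,\Delta_0}^{p}\}$. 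Taking expectation of $\alpha_0$ therefore demands moments of order strictly larger than $p$, or even exponential moments, of $\ltn\bx\rtn_{\tp,\Delta_0}$; neither is supplied by \eqref{gamma}, and in any event the result cannot be reassembled as $[4C_pC_g\Gamma(p)]^p+[4C_pC_g\Gamma(p)]$. So the displayed inequality is unjustified, and with it the comparison against \eqref{criterion}.

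The paper closes this gap by reversing the order of the two operations. It first applies, termwise and \emph{inside the logarithm}, the elementary inequality
\[
\log\Bigl[1+\tfrac{1}{2}M(1\vee a)(a+a^p)e^{\alpha a^p}\Bigr]\le M(a+a^p),\qquad a\ge 0,\ M\ge 1,
\]
with $a=4C_pC_g\ltn\theta_{-k}\bx\rtn_{\tp,[-\epsilon,1-\epsilon]}$ and $M=C_A(1+\|A\|)e^{\lambda_A+4L}$; this collapses the $N^{(p-1)/p}$ factor and the exponential correction simultaneously. Only afterwards does it invoke Birkhoff on the clean Ces\`aro averages $\frac{1}{n}\sum_k\ltn\theta_{-k}\bx\rtn_{\tp}$ and $\frac{1}{n}\sum_k\ltn\theta_{-k}\bx\rtn_{\tp}^{p}$, which converge exactly to $\Gamma(p)$ and $\Gamma(p)^p$ by \eqref{gamma}. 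That is the step which produces the precise right-hand side of \eqref{criterion}; the naive bound $\log(1+x)\le x$ discards the logarithmic damping too early and cannot recover these constants.
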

\begin{proof}
The proof follows \cite[Theorem 3.4]{duchong19} step by step, so we only sketch here some important details. First, by applying Lemma \ref{ytest} and using the estimates in \eqref{estx2} and \eqref{estxlin}
\begin{eqnarray*}
\|y_k\|+\ltn  y,R\rtn_{p{\rm -var},\Delta_k}\leq \|y_k\|e^{4L}N_k^{\frac{p-1}{p}}(\bx)e^{\alpha N_k(\bx)} + \Big( \frac{\|f(0)\|}{L}+  \frac{1}{C_p} \Big)e^{4L}N_k^{\frac{2p-1}{p}}(\bx)e^{\alpha N_k(\bx)} ,
\end{eqnarray*}
where $N_k(x) := N_{\Delta_k}(x)$, we derive from \eqref{yt} that
\allowdisplaybreaks
\begin{eqnarray}\label{xn}
\| y_n\|e^{\lambda n}&\leq &C_A \|y_0\| +\sum_{k=0}^{n-1} \kappa(\bx,\Delta_k) e^{\lambda_A +4L}N_k^{\frac{p-1}{p}}(\bx)e^{\alpha N_k(\bx)}  e^{\lambda k}  \|y_k\|  \\
&&+e^{\lambda_A }\sum_{k=0}^{n-1} \kappa(\bx,\Delta_k) e^{\alpha N_k(\bx)} e^{\lambda k} \Big\{\Big( \frac{\|f(0)\|}{L}+  \frac{1}{C_p} \Big)e^{4L}N_k^{\frac{2p-1}{p}}(\bx) + \frac{\|g(0)\|}{C_g} + \|f(0)\|\Big\}.\notag
\end{eqnarray}
Assign $a:= C_A\|y_0\|$, $u_k:= \| y_k\| e^{\lambda k}, \; k\geq 0$ and 
	\begin{eqnarray}
G(\bx,[a,b]) &:=& e^{\lambda_A+4L(b-a)}\kappa(\bx,[a,b]) N_{[a,b]}^{\frac{p-1}{p}}(\bx)e^{\alpha N_{[a,b]}(\bx)} ,\label{G}\\
H(\bx,[a,b]) &:=& M_0  e^{\lambda_A+4L(b-a)} \kappa(\bx,[a,b])e^{\alpha N_{[a,b]}(\bx)}  \Big[N_{[a,b]}^{\frac{2p-1}{p}}(\bx) + 1\Big], \label{H}
\end{eqnarray}
then \eqref{xn} has the form
\begin{eqnarray}
u_n
\leq a + \sum_{k=0}^{n-1}  G(x,\Delta_k)  u_k + \sum_{k=0}^{n-1} e^{\lambda k} H (x,\Delta_k).
\end{eqnarray}
We are now in the position to apply Lemma \ref{gronwall}, so that
\begin{equation}\label{y}
	\|y_n(\bx,y_0)\|\leq C_A\|y_0\|e^{-\lambda n}  \prod_{k=0}^{n-1} \Big[1+ G(\theta_k x,[0,1])\Big]+\sum_{k=0}^{n-1} e^{-\lambda (n-k)} H(\theta_k x, [0,1]) \prod_{j=k+1}^{n-1} \Big[1+ G(\theta_jx,[0,1])\Big].
\end{equation}
Now for any $t\in [n,n+1]$ by assigning $x$ with $\theta_{-t}x$ and using \eqref{y} we obtain
\begin{equation}\label{yy0}
\|y_t( \theta_{-t}x,y_0(\theta_{-t}x)) \|
\leq C_Ae^{4L}\|y_0(\theta_{-t}x)\|e^{-\lambda n}  \sup_{\epsilon\in[0,1]}  \prod_{k=1}^{n} \Big[1+ G(\theta_{-k} x,[-\epsilon,1-\epsilon])\Big]+ M_0 e^{4L}b(\bx),
\end{equation}
where
\begin{equation}\label{bx}
b(\bx):=\sup_{\epsilon\in[0,1]} \sum_{k=1}^{\infty}e^{-\lambda k}  H(\theta_{-k}x,[-\epsilon,1-\epsilon])\prod_{j=1}^{k-1} \Big[1+G( \theta_{-j}x,[-\epsilon,1-\epsilon])\Big].	
\end{equation}
A direct computation using \eqref{Nest} shows that
\begin{eqnarray*}
G(\bx,[-\epsilon,1-\epsilon]) 
&\leq& \frac{1}{2}C_A(1+|A|)e^{\lambda_A+4L}\Big[ 4C_pC_g \ltn \bx \rtn_{\tp,[-\epsilon,1-\epsilon]}+ [4C_pC_g]^p \ltn \bx \rtn^p_{p{\rm -var},[-\epsilon,1-\epsilon]}\Big]\times \\
&& \times \Big(1 \vee 4C_pC_g \ltn \bx \rtn_{\tp,[-\epsilon,1-\epsilon]}\Big)e^{\alpha [4C_pC_g]^p \ltn \bx \rtn^p_{p{\rm -var},[-\epsilon,1-\epsilon]}}.
\end{eqnarray*}
Applying the inequality
\[
\log\Big[1 + \frac{1}{2}M (1\vee a)(a + a^p)e^{\alpha a^p}\Big] \leq M (a +a^p),\quad \forall a\geq 0, M \geq 1,
\]
and the Birkhorff ergodic theorem, we obtain that
\allowdisplaybreaks
\begin{eqnarray*}
	&&\limsup \limits_{n \to \infty}\frac{1}{n}\log  \sup_{\epsilon\in[0,1]} \prod_{k=0}^{n-1} \Big[1+G(\theta_{-k}\bx,[-\epsilon,1-\epsilon])\Big] \\
	&=& \limsup \limits_{n \to \infty}\sup_{\epsilon\in[0,1]}\frac{1}{n}\sum_{k=0}^{n-1}\log  \Big[1+G(\theta_{k}\bx,[-\epsilon,1-\epsilon])\Big]  \\
	&\leq&  \sup_{\epsilon\in[0,1]}C_Ae^{\lambda_A+4L} \limsup \limits_{n \to \infty} \Big\{ \sup_{\epsilon\in[0,1]}\frac{1}{n}\sum_{k=0}^{n-1}  \ltn \theta_{-k}\bx \rtn^p_{p{\rm -var},[-\epsilon,1-\epsilon]} +  \sup_{\epsilon\in[0,1]}\frac{1}{n}\sum_{k=0}^{n-1}  \ltn \theta_{-k}\bx \rtn_{p{\rm -var},[-\epsilon,1-\epsilon]}\Big\}\\
	&\leq& C_Ae^{\lambda_A+4L}  \Big\{\Big[4C_pC_g\Gamma(\bx,p)\Big]^p + \Big[4C_pC_g\Gamma(\bx,p)\Big]\Big\} = \hat{G},
\end{eqnarray*} 
for a.s. all $\bx$. As a result under condition \eqref{criterion}, for $t\in \Delta_n$ with $0<\delta< \frac{1}{2}(\lambda-\hat{G})$ and $n$ large enough
\begin{equation}\label{yfinal}
\|y_t(\theta_{-t}\bx,y_0)\|
\leq C_Ae^{4L}\|y_0(\theta_{-t}x)\|\exp\left\{-\left(\lambda-\hat{G} -\delta\right)n\right\}+M_0e^{4L}b(\bx).
\end{equation}
Similar to \cite[Proposition 3.5]{duchong19}, one can prove that $b(\bx)$ is finite almost surely and tempered. This implies that, starting from any point $y_0(\theta_{-t}\bx) \in D(\theta_{-t}\bx)$ which is tempered due to \eqref{tempered}, there exists $n$ large enough such that for $t\in[n,n+1]$ 
\begin{equation}\label{bhat}
\|y_t(\theta_{-t}\bx,y_0)\|\leq 1+ M_0 e^{4L}b(\bx)
=: \hat{b}(\bx).
\end{equation}
Moreover, the temperedness of $\hat{b}(\bx)$ follows directly from the temperedness of $b(\bx)$. Therefore, there
exists a compact absorbing set $\mathcal{B}(\bx) = \bar{B}(0,\hat{b}(\bx))$ and thus a pullback attractor $\mathcal{A}(\bx)$ for system \eqref{fSDE0} which is given by \eqref{at}. 
\end{proof}

\begin{remark}\label{comparison}
	(i), Assume that $f(0) = g(0) = 0$ so that $y \equiv 0$ is a solution of \eqref{fSDE0}. Then \eqref{criterion} in Theorem \ref{attractor} is the exponential stability criterion for the trivial attractor $\mathcal{A}(\bx) \equiv 0$.
	
	(ii) It is important to note that the term $e^{\lambda_A+4(\|A\|+C_f)}$ in \eqref{criterion} is the unavoidable effect from the discretization scheme.
\end{remark}

In the rest of the paper, we are going to prove the same results on singleton attractors as in \cite{duchong19} for Young equations. First of all, the conclusion is very explicit in case $g$ is linear, as shown below.


\begin{theorem}\label{linear}
	Assume that $g(y) = Cy + g(0)$ is a linear map, with $\|C\| \leq C_g$. Then under the condition
	\begin{equation}\label{criterionlin}
	\lambda_A - C_AC_f> C_A[1+\|A\|]e^{\lambda_A+4(\|A\|+C_f)}  \Big\{\Big[4C_pC_g\Gamma(p)\Big] +\Big[4C_p C_g\Gamma(p)\Big]^p\Big\},
	\end{equation}
the attractor is a random singleton, i.e. $\mathcal{A}(\bx) = \{a(\bx)\}$ a.s. Moreover, $\mathcal{A}$ is both a pullback and forward attractor.
\end{theorem}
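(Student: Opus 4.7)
The plan is to exploit the linearity of $g$ to reduce the analysis of the difference $z_t := \ty_t - y_t$ of two solutions to an application of the \emph{homogeneous} part of the Gronwall scheme in Theorem \ref{attractor}. Since $g(y) = Cy + g(0)$, we have $g(\ty_s) - g(y_s) = C z_s$, so the integral equation \eqref{RDEdiff} becomes
\begin{equation*}
z_t = z_a + \int_a^t [f(y_s+z_s)-f(y_s)]\, ds + \int_a^t C z_s\, dx_s.
\end{equation*}
Thus $z$ itself solves a linear rough differential equation of the form covered by Theorem \ref{RDElinear}, but with $g(0) = 0$ and drift $\bar f(z;y) := f(y+z)-f(y)$ that is $C_f$-Lipschitz in $z$ with $\bar f(0;y) = 0$. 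Consequently the inhomogeneous constant $M_0$ in \eqref{estxlin} vanishes for $z$, yielding
\begin{equation*}
\|z_a\| + \ltn z, R\rtn_{\tp,[a,b]} \leq \|z_a\|\, e^{4 C_f(b-a) + \alpha N_{[a,b]}(\bx)}\, N^{\frac{p-1}{p}}_{[a,b]}(\bx).
\end{equation*}

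Next I apply Proposition \ref{YDEg} directly to the rough integral $\int_a^b \Phi(c-s)(C z_s)\, dx_s$, viewing $z$ as the solution of the linear RDE just written with effective diffusion coefficient $\bar g(z) = Cz$ (hence $\bar g(0) = 0$). This replaces the $\kappa\cdot\Lambda$ factor from Proposition \ref{RDEgdiff} by just $\kappa(\bx,[a,b])$, giving
\begin{equation*}
\Big\|\int_a^{b}\Phi(c-s)(Cz_s)\, d x_s\Big\| \leq e^{-\lambda_A(c-b)}\, \kappa(\bx,[a,b])\, \big(\|z_a\| + \ltn z,R\rtn_{\tp,[a,b]}\big).
\end{equation*}
Combining this with Corollary \ref{RDEvariationdiff} and iterating across the unit intervals $\Delta_k$ produces a counterpart of Lemma \ref{ztest} in which both the $\Lambda$ factor and the additive source term are absent:
\begin{equation*}
\|z_n\|\, e^{(\lambda_A - L_f) n} \leq C_A\|z_0\| + e^{\lambda_A}\sum_{k=0}^{n-1} e^{(\lambda_A - L_f)k}\,\kappa(\bx,\Delta_k)\, \big[\|z_k\| + \ltn z,R\rtn_{\tp,\Delta_k}\big].
\end{equation*}

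Plugging the linear $p$-variation bound for $z$ into this recursion converts it into a discrete Gronwall inequality $u_n \leq C_A\|z_0\| + \sum_{k=0}^{n-1} G(\theta_k\bx,[0,1])\, u_k$, with $u_k := \|z_k\|e^{\lambda k}$ and with the identical amplification $G$ defined in \eqref{G}. Lemma \ref{gronwall} then gives
\begin{equation*}
\|z_n\| \leq C_A\|z_0\|\, e^{-\lambda n}\, \prod_{k=0}^{n-1}\big[1 + G(\theta_k\bx,[0,1])\big].
\end{equation*}
Invoking verbatim the Birkhoff ergodic argument from the proof of Theorem \ref{attractor}, the hypothesis \eqref{criterionlin} guarantees that the product grows at an exponential rate strictly smaller than $\lambda$, whence $\|z_n(\theta_{-n}\bx)\|\to 0$ exponentially as $n\to\infty$ for every pair of initial values $y_0,\ty_0$. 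This forces the pullback attractor $\mathcal{A}(\bx)$ produced by Theorem \ref{attractor} to collapse to a single point $a(\bx)$ almost surely; applying the same decay estimate with time running forward gives $\|\varphi(t,\bx)y_0 - a(\theta_t\bx)\|\to 0$ as $t\to\infty$, upgrading the pullback convergence to the forward one.

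The main technical obstacle, and the sole place where linearity is genuinely used, is the careful verification that Proposition \ref{YDEg} applies to the equation for $z$ so as to eliminate the $\Lambda$-factor in Lemma \ref{ztest}; everything else is a straightforward transcription of the Birkhoff--Gronwall argument behind Theorem \ref{attractor} to the homogeneous case $\bar f(0;y)=\bar g(0)=0$.
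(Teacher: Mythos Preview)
Your argument is correct and follows essentially the same route as the paper. Both proofs use that $g(\ty)-g(y)=Cz$ so that Proposition~\ref{YDEg} (Case~2) applies to $\bar g(z)=Cz$ with $\bar g(0)=0$, eliminating the $\Lambda$-factor that would otherwise appear via Proposition~\ref{RDEgdiff}; both then invoke the linear $p$-variation bound for $z$ (you cite Theorem~\ref{RDElinear} with $M_0=0$, the paper cites the analogous estimate from \cite[Theorem~3.9]{duc20}), feed it into the Lemma~\ref{ztest}-type recursion, and close with discrete Gronwall plus Birkhoff. The paper writes the multiplicative factor as a new function $I(\bx,[0,1])$ and computes $\log[1+I]$ explicitly, whereas you reuse the $G$ of \eqref{G}; since $I$ and $G$ differ only by harmless constants (a $\vee$ versus a $+$, and $4C_f$ versus $4L$ in the exponent), the same criterion \eqref{criterionlin} suffices in either version.

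Two small points worth tightening. First, Theorem~\ref{RDElinear} is stated for autonomous $f$, whereas your drift $\bar f(z;y_t)=f(y_t+z)-f(y_t)$ is time-dependent; the estimate still holds because the proof only uses $\bar f(0;\cdot)=0$ and the $C_f$-Lipschitz bound, and the paper handles this the same way by writing ``similar arguments \ldots\ with $P(t,0)=0$''. Second, your phrase ``for every pair of initial values $y_0,\ty_0$'' is slightly loose: to collapse $\mathcal A(\bx)$ you must take $a_1,a_2\in\mathcal A(\bx)$ and pull back by invariance to $b_1,b_2\in\mathcal A(\theta_{-n}\bx)$, so that $\|z_0\|=\|b_1-b_2\|\le 2\hat b(\theta_{-n}\bx)$ is merely tempered rather than fixed; temperedness is then what lets the exponential contraction win. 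The paper makes this step explicit.
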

\begin{proof}
 The existence of the pullback attractor $\mathcal{A}$ is followed by Theorem \ref{attractor}. Take any two points $a_1,a_2\in \mathcal{A}(x) $. For a given $n \in \N$, assign $x^*:=\theta_{-n}x$  and consider the equation
\begin{equation}\label{equ.star1}
dy_t = [Ay_t + f(y_t)]dt + g(y_t)dx^*_t.
\end{equation}
Due to the invariance of $\mathcal{A}$ under the flow, there exist $b_1,b_2\in \mathcal{A}(x^*)$ such that $a_i=y_n(x^*,b_i)$.
Put $z_t= z_t(x^*):= y_t(x^*,b_1)- y_t(x^*,b_2)$ then $z_n(x^*) =a_1- a_2$
and we have 
\begin{eqnarray}\label{zequ}
dz_t= [Az_t + P(t,z_t)]dt + Q(t,z_t)dx^*_t
\end{eqnarray}
where we write in short $y^1_t = y_t(x^*,b_1)$ and
\begin{eqnarray*}
	P(t,z_t)&=&f(y(t,x^*,b_2))-f(y(t,x^*,b_1))= f(y^1_t+z_t)-f(y^1_t),\\
	Q(t,z_t)&=&g(y(t,x^*,b_2))-g(y(t,x^*,b_1))=g(y^1_t+z_t)-g(y^1_t).
\end{eqnarray*}	  
For $g(y) = Cy$, observe that $Q(t,z_t) = C z_t$ and
\begin{eqnarray*}
\ltn [\Phi(c-\cdot)Cz_\cdot]^\prime \rtn_{\tp,[a,b]}&\leq&C_g^2 \|\Phi(c-\cdot)\|_{\infty,[a,b]} \ltn z\rtn_{\tp,[a,b]} + C_g^2\ltn \Phi(c-\cdot)\rtn_{\tp,[a,b]} \ltn z \rtn_{\infty,[a,b]} \\
\ltn R^{\Phi(c-\cdot)Cz_\cdot} \rtn_{\tq,[a,b]} &\leq& C_g\|\Phi(c-\cdot) \|_{\infty,[a,b]} \ltn R^z\rtn_{\tq,[a,b]} + C_g\ltn \Phi(c-\cdot) \rtn_{\tq,[a,b]} \|z\|_{\infty,[a,b]}. 
\end{eqnarray*}
As a result, the estimate in \eqref{int2} is of the form
\allowdisplaybreaks
\begin{eqnarray}\label{zt2b}
&&\Big\|\int_a^b \Phi(c-s) Q(s,z_s) dx^*_s \Big\| \notag\\
&\leq& \|\Phi(c-a) Cz_a\| \ltn x^* \rtn_{\tp,[a,b]} + \|\Phi(c-a)C^2 z_a\| \ltn \X^* \rtn_{\tq,[a,b]} \notag\\
&&+ C_p \Big\{\ltn x^* \rtn_{\tp,[a,b]} \ltn R^{\Phi(c-\cdot)C} \rtn_{\tq,[a,b]} + \ltn \X^* \rtn_{\tq,[a,b]} \ltn [\Phi(c-\cdot)C]^\prime \rtn_{\tp,[a,b]}\Big\} \notag\\
&\leq& C_p C_A[1+\|A\|(b-a)] e^{-\lambda_A(c-b)}\Big(C_g \ltn \bx^* \rtn_{\tp,[a,b]} +C_g^2 \ltn \bx^* \rtn^2_{\tp,[a,b]}\Big) \times\notag\\
&&\times \Big(\|z_a\|+ \ltn z,R^z\rtn_{\tp,[a,b]}\Big).
\end{eqnarray}
Meanwhile, similar arguments in \cite[Theorem 3.9]{duc20}, with $P(t,0) =0$, show that 
\begin{eqnarray*}
\ltn z,R^z \rtn_{\tp,[a,b]} + \|z_a\|&\leq& \|z_a\| e^{4L(b-a) + \alpha N_{[a,b]}(\bx^*)}N^{\frac{p-1}{p}}_{[a,b]}(\bx^*),\quad \text{with} \\
N_{[a,b]}(\bx^*) &\leq& 1+[4C_pC_g]^p \ltn \bx^* \rtn_{p{\rm -var},[a,b]}^p.
\end{eqnarray*}
As a result, 
\begin{eqnarray*}
e^{\lambda n} \|z_n\|&\leq & 
C_A\|z_0\| +C_p C_A[1+\|A\|]e^{\lambda_A}\sum_{k=0}^{n-1} \Big(C_g \ltn \bx^* \rtn_{\tp,\Delta_k} + C_g^2 \ltn \bx^* \rtn^2_{\tp,\Delta_k}\Big) \times \\
&&\times  e^{4L + \alpha N_{\Delta_k}(\bx^*)}N^{\frac{p-1}{p}}_{\Delta_k}(\bx^*) e^{\lambda k}\|z_k\| \\
&\leq&C_A\|z_0\| + \sum_{k=0}^{n-1}I(\bx^*,\Delta_k)e^{\lambda k} \|  z_k\|,
\end{eqnarray*}
where
\begin{eqnarray*}
I(\bx,[a,b])=C_p C_A[1+\|A\|]e^{\lambda_A}\Big(C_g \ltn \bx \rtn_{\tp,[a,b]} + C_g^2 \ltn \bx \rtn^2_{\tp,[a,b]}\Big)e^{4L + \alpha N_{[a,b]}(\bx)}N^{\frac{p-1}{p}}_{[a,b]}(\bx) 
\end{eqnarray*}
is a function of $\bx$. We now apply the discrete Gronwall lemma \ref{gronwall} to obtain
\begin{eqnarray*}
e^{\lambda n} \|z_n\|&\leq &C_A\|z_0\| \prod_{k=0}^{n-1}\Big[1+I(\theta_{k-n}\bx,[0,1])\Big].
\end{eqnarray*}
Hence, it follows from Birkhorff's ergodic theorem that
\begin{eqnarray*}
	\limsup \limits_{n \to \infty} \frac{1}{n}\log \|z_n\| &\leq &-\lambda + \limsup \limits_{n \to \infty} \frac{1}{n} \sum_{k=1}^n \log \Big[1+I(\theta_{-k}\bx,[0,1])\Big] \leq -\lambda + E \log\Big[1+ I(\bx,[0,1])\Big].
\end{eqnarray*}
Given $C_p$ and $\alpha$, it follows from the estimate of $N_{[0,1]}(\bx)$ and the inequalities 
\begin{eqnarray*}
\log(1+ u e^v) &\leq& v+\log(1+u) , \quad \forall u, v \geq 0, \\
\log\Big[ 1+ \frac{(2C_p+3)v}{4C_p} (1+u^{p-1})(u+u^2) \Big] &\leq& (2-\alpha)v (u+u^p),\quad \forall u \geq 0, v \geq 1,
\end{eqnarray*}
that
\begin{eqnarray*}
&&\log \Big[1+I(\bx,[0,1])\Big]\\
&\leq & \log\Big\{1+C_p C_A[1+\|A\|]e^{\lambda_A} e^{4L + \alpha}N^{\frac{p-1}{p}}_{[a,b]}(\bx)\Big(C_g \ltn \bx \rtn_{\tp,[a,b]} + C_g^2 \ltn \bx \rtn^2_{\tp,[a,b]}\Big) \Big\} \\
&&+ \alpha \Big(4C_pC_g\Big)^p \ltn \bx \rtn^p_{\tp,[0,1]} \\
&\leq& \log\Big\{1+ \frac{2C_p+3}{4C_p}C_A[1+\|A\|]e^{\lambda_A+4L} \Big[1+ (4C_pC_g)^{p-1} \ltn \bx \rtn^{p-1}_{\tp,[0,1]}\Big] \times\\
&&\times \Big(4C_pC_g \ltn \bx \rtn_{\tp,[a,b]} + \Big[4C_pC_g\Big]^2 \ltn \bx \rtn^2_{\tp,[a,b]}\Big)\Big\} + \alpha \Big(4C_p\|C\|\Big)^p \ltn \bx \rtn^p_{\tp,[0,1]} \\
&\leq& C_A(1+\|A\|)e^{\lambda_A+4L}  \Big\{\Big[4C_pC_g\ltn \bx \rtn_{\tp,[0,1]}\Big] +\Big[4C_pC_g\ltn \bx \rtn_{\tp,[0,1]}\Big]^p \Big\}
\end{eqnarray*}
Therefore we finally obtain
\begin{eqnarray*}
	\limsup \limits_{n \to \infty} \frac{1}{n}\log \|z_n\|&\leq & -\lambda  +C_A(1+\|A\|)e^{\lambda_A+4L}  \Big\{\Big[4C_pC_g\Gamma(p)\Big] +\Big[4C_pC_g\Gamma(p)\Big]^p\Big\} <0
\end{eqnarray*}
under the condition \eqref{criterionlin}. This follows that $\lim_{n\to\infty} \|a_1-a_2\|=0$ or $\mathcal{A}$ is an one point set. The arguments in the forward direction of convergence to $\mathcal{A}$ are similar.
\end{proof}
	
The next case of bounded $g \in C^3_b$ is a little more technical, but applies the same idea as in \cite[Theorem 3.11]{duchong19}.	
\begin{theorem}\label{gbounded}
	Assume that $g \in C^3_b$ and $\lambda_A > C_fC_A$. Then there exists a $\delta >0$ small enough such that for any $C_g \leq \delta$ the attractor is a singleton. Moreover, $\mathcal{A}$ is both a pullback and forward attractor.
\end{theorem}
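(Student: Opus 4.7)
The plan mirrors the pullback-invariance argument of Theorem \ref{linear}, replacing the clean linear identity $g(y^1+z)-g(y^1)=Cz$ by the quantitative contraction estimate \eqref{roughest4} of Theorem \ref{RDEdifference}. Existence of $\mathcal{A}(\bx)$ is already provided by Theorem \ref{attractor}, whose condition \eqref{criterion0} is automatic once $C_g$ sits below an explicit threshold $\delta_0$. For the singleton claim I fix $a_1,a_2\in\mathcal{A}(\bx)$; by invariance of the attractor, for every $n\in\N$ there exist $b_i^{(n)}\in\mathcal{A}(\theta_{-n}\bx)$ with $a_i=y_n(\theta_{-n}\bx,b_i^{(n)})$. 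Write $x^*=\theta_{-n}\bx$ and $z_t:=y_t(x^*,b_2^{(n)})-y_t(x^*,b_1^{(n)})$, so that $z_n=a_2-a_1$ and $\|z_0\|\leq 2\hat b(x^*)$, with $\hat b$ the tempered absorbing radius from \eqref{bhat}.

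Next I feed the difference estimate \eqref{roughest4} into the recursive inequality \eqref{zt} of Lemma \ref{ztest}. Writing $\lambda:=\lambda_A-L_f$, this produces a bound of the form
$$\|z_n\|\,e^{\lambda n}\leq C_A\|z_0\|+\sum_{k=0}^{n-1}J(\theta_k x^*,[0,1])\,e^{\lambda k}\|z_k\|,$$
where $J$ is an explicit product of $\kappa$, $\Lambda$, the factor $(1+[8C_pC_g\Lambda]^{p-1}\ltn\bx\rtn_{\tp}^{p-1})$ coming from \eqref{roughest4}, and the prefactor $e^{\lambda_A+4L}$. The discrete Gronwall lemma (Lemma \ref{gronwall}) then yields
$$\|z_n\|\,e^{\lambda n}\leq C_A\|z_0\|\prod_{k=0}^{n-1}\bigl(1+J(\theta_k x^*,[0,1])\bigr).$$

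The main technical obstacle is turning $J(\theta_k x^*,[0,1])$, whose definition depends on the two non-stationary orbits $y_t(x^*,b_i^{(n)})$ through $\Lambda$, into a stationary majorant. The key observation is that these orbits live inside the attractor $\mathcal{A}(\theta_k x^*)\subset\bar B(0,\hat b(\theta_k x^*))$, so definition \eqref{Lambda} gives a pointwise bound $\Lambda(x^*,\Delta_k)\leq\Lambda_\star(\theta_k x^*)$ where $\Lambda_\star$ depends only on $\hat b$, $\ltn\bx\rtn_{\tp,[0,1]}$ and the fixed constants. Combined with the overall factor $C_g$ (or $C_g^2$) inside $\kappa$, this produces an integrable, stationary majorant $\tilde J(\theta_k x^*)\geq J(\theta_k x^*,[0,1])$ that tends to $0$ in $L^1(\bP)$ as $C_g\to 0^+$. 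Birkhoff's ergodic theorem applied to the log-product then gives
$$\limsup_{n\to\infty}\frac{1}{n}\log\|z_n\|\leq-\lambda+\E\log\bigl(1+\tilde J(\bx)\bigr),$$
and dominated convergence provides a $\delta\in(0,\delta_0]$ such that the right-hand side is strictly negative for all $C_g\leq\delta$. Hence $\|z_n\|\to 0$ exponentially, $a_1=a_2$, and $\mathcal{A}(\bx)=\{a(\bx)\}$ a.s.

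For the forward statement I apply the same contraction to $z_t:=y_t(\bx,y_0)-a(\theta_t\bx)$ (both terms are solutions, the second by invariance of the singleton), this time without pullback: a tempered $y_0$ enters the absorbing ball after finite time, so from then on the bound $\Lambda\leq\Lambda_\star$ remains valid along $(\theta_t\bx)$ and $\|z_t\|\to 0$. The hard part throughout is the uniform-in-$n$ passage from $\Lambda(x^*,\Delta_k)$ to the stationary $\Lambda_\star(\theta_k x^*)$; once that is in place, the ergodic/Gronwall mechanism already used in Theorems \ref{attractor} and \ref{linear} does the rest.
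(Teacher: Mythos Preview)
Your overall architecture---invariance of $\mathcal{A}$, difference equation, Lemma \ref{ztest} combined with \eqref{roughest4}, discrete Gronwall, then Birkhoff---matches the paper's Step 3 exactly, and your identification of the ``main technical obstacle'' (replacing the orbit-dependent $\Lambda(x^*,\Delta_k)$ by a stationary bound via the absorbing radius) is the right diagnosis. The gap is in how you dispose of that obstacle. You bound $\Lambda$ using the radius $\hat b$ from \eqref{bhat}, but that $\hat b$ is only shown to be \emph{tempered}, not to have any finite moment. Your majorant $\tilde J$ contains $\Lambda_\star^p$ and hence essentially $\hat b^p$, so neither ``$\tilde J$ is integrable'' nor ``$\tilde J\to 0$ in $L^1(\bP)$ as $C_g\to 0$'' nor even ``$\log(1+\tilde J)\in L^1$'' follows from temperedness. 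Without integrability you cannot invoke Birkhoff to control the log-product, and the argument stalls.

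The paper closes this gap with two preparatory steps you have omitted. Step 1 proves a one-step dissipativity estimate $\|y_r(x,y_0)\|^{2p}\le\eta\|y_0\|^{2p}+\xi_r(x)$ with $\eta<1$ and $\xi_r\in L^1$, obtained by comparing $y$ to the deterministic flow $\mu$ and exploiting $g\in C^3_b$ (so $\|g\|_\infty,\|Dg\|_\infty\le C_g$) to make the rough remainder $h=y-\mu$ grow only like $\|y_0\|^\beta$ with $\beta=2/p<1$. Step 2 iterates this to produce a \emph{new} absorbing radius $\hat b(x)=R_r(x)^{1/2p}$ with $R_r\in L^1$, hence $\hat b^{2p}$ integrable. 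Only with this upgraded $\hat b$ does Step 3 obtain $I(\bx,[0,1])\le \hat b^{2p}(\bx)+M(\bx,[0,1])\in L^1$, after which your Gronwall/ergodic conclusion goes through. In short, your plan is missing the construction of an absorbing set with an integrable (not merely tempered) radius; that is precisely the content of the paper's Steps 1--2 and is where $g\in C^3_b$ (as opposed to merely Lipschitz) is used.
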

\begin{proof}
	We follow line by line three steps in the proof of \cite[Theorem 3.12]{duchong19}. \\
	
	 {\bf Step 1}. Similar to \cite[Theorem 3.11]{duchong19}, we prove that there exists a time $r>0$, a constant $\eta  \in (0,1)$, and an integrable random variable $\xi_r(x)$ such that
	 \begin{equation}\label{ydissipative}
	 \|y_r(x,y_0)\|^{p} \leq \eta \|y_0\|^{p} + \xi_r(x).
	 \end{equation}
	 First denote by $\mu_t$ the solution of the deterministic system $\dot{\mu} = A\mu + f(\mu)$ which starts at $\mu_0 = y_0$.  Using \eqref{yt} we obtain for any fixed $r>0$ the estimates
	\begin{eqnarray}
	\|\mu_t\| &\leq& C_A\|y_0\| e^{-\lambda t} + C_A\frac{\|f(0)\|}{\lambda},\\
	\|\mu_{s,t}\| &\leq& \int_s^t (\|f(0)\| + L \|\mu_u\|)du \leq D(1+ \|y_0\|) (t-s),\forall 0 \leq s < t \leq r, \label{muest1}
	\end{eqnarray}
	for a generic constant $D$ independent of $\|y_0\|$ from now on. Define $h_t = y_t - \mu_t$, then $h$ satisfies
	\[
	h_{s,t} = \int_s^t \Big(Ah_u + f(h_u+\mu_u) - f(\mu_u)\Big) du + \int_s^t g(h_u + \mu_u) dx_u,
	\]
	which implies that $h$ is also controlled by $x$ with $h^\prime_s = y^\prime_s = g(h_s + z_s)$ and $R^h_{s,t} = R^y_{s,t} - z_{s,t}$. In addition, 
	\begin{eqnarray}\label{hest1}
	\|h_{s,t}\| &\leq& \int_s^t L \|h_u\| du + C_g \|x_{s,t}\| + C^2_g \|\X_{s,t}\| \notag\\
	&& + C_p \Big\{ \ltn \X \rtn_{\tq,[s,t]} \ltn [g(y)]^\prime \rtn_{\tp,[s,t]} + \ltn x \rtn_{\tp,[s,t]} \ltn R^{g(y)}\rtn_{\tq,[s,t]} \Big\}
	\end{eqnarray}
	Now observe that for $\beta = \frac{2}{p} \in (\frac{2}{3},1)$,
	\begin{eqnarray}\label{muest2}
	&& \|g(\mu_u + h_u) - g(\mu_v +h_v)\| \vee \|Dg(\mu_u + h_u) - Dg(\mu_v +h_v)\| \notag\\
	&\leq& C_g \|h_{u,v}\| + 2C_g \|\mu_{u,v}\|^\beta \notag\\
	&\leq& C_g \|h_{u,v}\| + D (1+ \|y_0\|^\beta) (t-s)^\beta, \forall 0 \leq u < v \leq r.
	\end{eqnarray}
	Hence \eqref{muest2} derives 
	\begin{equation}\label{muest3}
	\ltn g(\mu +h)\rtn_{p{\rm-var},[s,t]} \vee	\ltn Dg(\mu +h)\rtn_{p{\rm-var},[s,t]} 
	 \leq C_g \ltn h \rtn_{p{\rm-var},[s,t]} + D (1+ \|y_0\|^\beta)(t-s)^\beta,\quad \forall 0 \leq s < t \leq r. 
	\end{equation}
	Inequality \eqref{muest3} together with $[g(y)]^\prime_s = Dg(y_s) g(y_s)$ leads to
	\[
	\ltn [g(y)]^\prime \rtn_{\tp,[s,t]} \leq 2 C_g^2 \Big(\ltn h \rtn_{\tp,[s,t]} +D(1+ \|y_0\|^\beta) r^\beta\Big),\quad \forall 0\leq s<t \leq r.
	\]
	Furthermore, for all $0 \leq s < t \leq r$,
	\begin{eqnarray*}
	\|R^{g(y)}_{s,t}\| &=&  \|g(y_s + h^\prime_s \otimes x_{s,t} + R^h_{s,t} + \mu_{s,t}) - g(y_s) - Dg(y_s) g(y_s) \otimes x_{s,t}\| \notag\\
	&\leq&  \|g(y_s + g(y_s) \otimes x_{s,t} + R^h_{s,t} + \mu_{s,t}) - g(y_s + h^\prime_s \otimes x_{s,t})\| \notag\\
	&& + \|g(y_s + g(y_s) \otimes x_{s,t})- g(y_s) - Dg(y_s) g(y_s) \otimes x_{s,t}\| \notag\\
	&\leq& C_g \|R^h_{s,t}\| + 2C_g \|\mu_{s,t}\|^\beta + \int_0^1 \|D_g(y_s + \eta g(y_s) \otimes x_{s,t}) - Dg(y_s)\| \|g(y_s)\| \|x_{s,t}\| d \eta \notag\\
	&\leq& C_g \|R^h_{s,t}\| +  D (1+ \|y_0\|^\beta)(t-s)^\beta + \frac{1}{2}C_g^3 \|x_{s,t}\|^2,
	\end{eqnarray*}
	which leads to
	\begin{equation}\label{muest4}
	\ltn R^{g(y)} \rtn_{\tq,[s,t]} \leq C_g \ltn R^h \rtn_{\tq,[s,t]} +   D(1+ \|y_0\|^\beta) r^\beta + \frac{1}{2}C_g^3 \ltn x \rtn_{\tp,[s,t]}
	\end{equation}
	 for all $0 \leq s < t \leq r$.	Replacing \eqref{muest3} and \eqref{muest4} into \eqref{hest1} we obtain
	\begin{eqnarray*}
	&&\ltn h\rtn_{\tp,[s,t]}\\
	&\leq& \int_s^t L \|h_u\| du + C_g \ltn x\rtn_{\tp,[s,t]} + C^2_g \ltn \X \rtn_{\tq,[s,t]} + \frac{1}{2} C_p C_g^3 \ltn x\rtn_{\tp,[s,t]}^3\notag\\
	&& + C_p (2C_g^2\ltn \X \rtn_{\tq,[s,t]} + C_g\ltn x \rtn_{\tp,[s,t]}) \Big\{\ltn h,R^h\rtn_{\tp,[s,t]} +  D (1+ \|y_0\|^\beta)r^\beta \Big\}.
	\end{eqnarray*}
	The same estimate for $R^h$ also holds
	\begin{eqnarray*}
	&& \ltn R^h\rtn_{\tq,[s,t]}\\
	 &\leq& \int_s^t L \|h_u\| du  + C^2_g \ltn \X \rtn_{\tq,[s,t]} + \frac{1}{2} C_p C_g^3 \ltn x\rtn_{\tp,[s,t]}^3\notag\\
	&& + C_p (2C_g^2\ltn \X \rtn_{\tq,[s,t]} + C_g\ltn x \rtn_{\tp,[s,t]}) \Big\{\ltn h,R^h\rtn_{\tp,[s,t]} + D(1+ \|y_0\|^\beta)r^\beta \Big\}.
\end{eqnarray*}
Hence we obtain
\begin{eqnarray*}
&& \ltn h,R^h\rtn_{\tp,[s,t]} \notag\\
&\leq&  \int_s^t 2L \|h_u\| du + C_g \ltn x\rtn_{\tp,[s,t]} + 2 C^2_g \ltn \X \rtn_{\tq,[s,t]} +  C_p C_g^3 \ltn x\rtn_{\tp,[s,t]}^3\notag\\
&& + 2 C_p (2C_g^2\ltn \X \rtn_{\tq,[s,t]} + C_g\ltn x \rtn_{\tp,[s,t]}) \Big\{\ltn h,R^h\rtn_{\tp,[s,t]} +  D(1+ \|y_0\|^\beta)r^\beta \Big\}.
\end{eqnarray*}
Using similar arguments to the proof of Theorem \ref{RDE}, we can prove that
	\[
	\|h\|_{\infty,[s,t]} \leq e^{4L(t-s)} \Big[ \|h_s\| + \Big(\frac{1}{C_p} +D(1+ \|y_0\|^\beta)r^\beta\Big) N_{[s,t]}(\bx)\Big],\forall 0 \leq s < t \leq r.
	\]
	Since $h_0 = y_0 - \mu_0 = 0$, it follows that
	\begin{eqnarray}\label{hest2}
	\|h_r\| \leq \|h\|_{\infty,[0,r]} \leq e^{4Lr}N_{[0,r]}(\bx)  \Big(\frac{1}{C_p} + D(1+ \|y_0\|^\beta)r^\beta\Big) \leq \xi (\bx) (1+  \|y_0\|^\beta)
	\end{eqnarray}
	where $\xi$ is a linear form of $N_{[0,r]}(\bx)$.  Similar to \cite[Theorem 3.11]{duchong19}, we apply, for $\epsilon>0$ small enough, the convex inequality and Young inequality to conclude that 
	\allowdisplaybreaks
	\begin{eqnarray}\label{hest3}
	\|y_r\|^{2p} \leq (\|h_r\| + \|\mu_r\|)^{2p} \leq (1+\epsilon)^{2(2p-1)} \Big[(C_Ae^{-\lambda r})^{2p}  + \epsilon \beta \Big] \|y_0\|^{2p} + \xi_{r}(\bx),
	\end{eqnarray}
	for some function $\xi_{r}(\bx)$ of $\bx$, which is dependent on $\epsilon$ and integrable due to the assumption. By choosing $r >0$ large enough and $\epsilon \in (0,1)$ small enough such that  
	\[
	C_Ae^{-\lambda r} < 1 \quad \text{and} \quad \eta : = (1+\epsilon)^{2(2p-1)} \Big[(C_Ae^{-\lambda r})^{2p}  + \epsilon \beta \Big]<1,
	\]
	we obtain \eqref{ydissipative}.
	 \\
	
		{\bf Step 2.} Next, for simplicity we only estimate $y$ at the discrete times $nr$ for $n \in \N$, the estimate for $t \in [nr,(n+1)r]$ is similar to \eqref{yy0}. From \eqref{ydissipative}, it is easy to prove by induction that
	\[
	\|y_{nr}(x,y_0)\|^{2p} \leq \eta^n \|y_0\|^{2p} + \sum_{i = 0}^{n-1} \eta^i \xi_r(\theta_{(n-i)r}x), \quad \forall n \geq 1;
	\]
	thus for $n$ large enough 
	\[
	\|y_{nr}(\theta_{-nr}x,y_0)\|^{2p} \leq \eta^n \|y_0\|^{2p} + \sum_{i = 0}^n \eta^i \xi_r(\theta_{-ir}x) \leq 1 + \sum_{i =0}^\infty \eta^i \xi_r(\theta_{-ir}x) =: R_r(x).
	\]
	In this case we could choose $\hat{b}(x)$ in \eqref{bhat} to be $\hat{b}(x)=R_r(x)^{\frac{1}{2p}}$ so that there exists a pullback absorbing set $\cB(x) = B(0,\hat{b}(x))$ containing our random attractor $\mathcal{A}(x)$. Moreover, due to the integrability of $\xi_r(x)$, $R_r(x)$ is also integrable with $\E R_r = 1 + \frac{\E \xi_r}{1-\eta}$.

	{\bf Step 3.} Now back to the arguments in the proof of Theorem \ref{linear} and using Lemma \ref{ztest} and \eqref{roughest4}, we obtain
	\begin{eqnarray}\label{zt2}
	\|z_n\|e^{\lambda n}
	&\leq&  C_A \|z_0\| + e^{\lambda_A+4L}\sum_{k=0}^{n-1} \kappa(\bx^*,\Delta_k) \Lambda(\bx^*,\Delta_k) \Big(1+ \Big[8C_p C_g \Lambda(\bx^*,\Delta_k)\Big]^{p-1} \ltn \bx^* \rtn_{p{\rm -var},\Delta_k}^{p-1} \Big)e^{\lambda k} \|z_k\| \notag \\
	&\leq& C_A\|z_0\| + \sum_{k=0}^{n-1}I(\bx^*,\Delta_k)e^{\lambda k} \|  z_k\|.
	\end{eqnarray}
	We now apply the discrete Gronwall lemma \ref{gronwall} to obtain
	\begin{eqnarray*}
		e^{\lambda n} \|z_n\|&\leq &C_A\|z_0\| \prod_{k=0}^{n-1}\Big[1+I(\theta_{k-n}\bx,[0,1])\Big].
	\end{eqnarray*}
From now on, the proof applies the same arguments as in the proof of Theorem \ref{linear}, so we only focus on proving that $\E \log\Big[1+ I(\bx,[0,1])\Big] <\infty$. Assign $\bar{\Lambda}(\bx,[a,b]) :=8C_p C_g\Lambda(\bx,[a,b]) \ltn \bx \rtn_{p{\rm -var},[a,b]}$. Using \eqref{roughest4} with $b_1(\bx),b_2(\bx)$, H\"older inequality and Cauchy inequality, it follows, with a generic polynomial $M(\bx,[0,1])$ of $\ltn \bx \rtn_{p{\rm -var},[0,1]}$, that
\begin{eqnarray}\label{eqI}
I(\bx,[0,1]) &\leq& \frac{1}{4}C_pC_A(1+|A|) \Big(1 \vee C_g \ltn \bx \rtn_{\tp,[0,1]} \Big) \Big(\bar{\Lambda}(\bx,[0,1])  + \bar{\Lambda}^p(\bx,[0,1]) \Big) \notag\\
&\leq& \Big(1 + \bar{\Lambda}^p(\bx,[0,1]) \Big)M(\bx,[0,1]) \notag\\
&\leq& \Big(1 + \Lambda^p(\bx,[0,1]) \Big)M(\bx,[0,1]) \notag\\
&\leq& \Big[\Big(\|b_1(\bx)\| \vee \|b_2(\bx)\|\Big)^p(\bx) + 1\Big] M(\bx,[0,1]) \notag\\
&\leq& \Big(\hat{b}^p(\bx) + 1\Big) M(\bx,[0,1]) \notag\\
&\leq& \hat{b}^{2p}(\bx) + M(\bx,[0,1]).
\end{eqnarray}
The first term in \eqref{eqI} is integrable due to {\bf Step 2}, while the second term is integrable from the assumption. Therefore $I(\bx,[0,1])$ is integrable and so is $\log\Big[1+ I(\bx,[0,1])\Big]$, which is enough to prove that the pullback attractor is a singleton and is also a forward attractor.
\end{proof}	

\begin{corollary}\label{continuityattractor}
	Denote by $\mu^*$ the unique equilibrium of the deterministic system $\dot{\mu} = A\mu + f(\mu)$. Assume that $\|g\|_\infty \leq C_g $ and the assumptions of Theorem \ref{gbounded} hold so that there exists a singleton attractor $\mathcal{A}(x) = \{a(x)\}$ for $C_g < \delta$ small enough. Then 
	\begin{equation}\label{attractorcont}
	\lim \limits_{C_g \to 0} \|a(x) - \mu^*\| =0 \quad \text{a.s., and}\quad \lim \limits_{C_g \to 0} \E \|a(x) - \mu^*\|^{2p} =0.   
	\end{equation}
\end{corollary}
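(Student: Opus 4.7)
The plan is to exploit the singleton invariance $y_T(\theta_{-T}\bx, a(\theta_{-T}\bx)) = a(\bx)$ and compare the rough solution with the purely deterministic flow towards $\mu^*$, which is exponentially stable since $\lambda_A > C_A C_f$. Write $a^{C_g}(\bx)$ for the singleton attractor to emphasize dependence on the smallness parameter. Fix $T>0$, let $y_t$ solve the rough equation \eqref{stochYDE} driven by $\theta_{-T}\bx$ with $y_0 = a^{C_g}(\theta_{-T}\bx)$, and let $\mu_t$ solve $\dot\mu=A\mu+f(\mu)$ with $\mu_0 = a^{C_g}(\theta_{-T}\bx)$. By invariance $y_T = a^{C_g}(\bx)$, and setting $h_t := y_t-\mu_t$ we get
\[
\|a^{C_g}(\bx)-\mu^*\| \;\leq\; \|\mu_T-\mu^*\| + \|h_T\|,
\]
with $\|\mu_T-\mu^*\| \leq C_A e^{-(\lambda_A-C_AC_f)T}\|a^{C_g}(\theta_{-T}\bx)-\mu^*\|$ by standard Gronwall applied to the deterministic flow.

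To bound $\|h_T\|$ for fixed $T$ as $C_g\to 0$, I would subtract variation-of-constants formulas to obtain
\[
h_T = \int_0^T \Phi(T-s)[f(y_s)-f(\mu_s)]\,ds + \int_0^T \Phi(T-s) g(y_s)\, d\bx_s.
\]
The drift term contributes $\int_0^T C_A C_f e^{-\lambda_A(T-s)}\|h_s\|\,ds$ and is absorbed via the discrete Gronwall Lemma \ref{gronwall}. The rough integral is handled on each unit subinterval $[k,k+1]\subset[0,T]$ by Proposition \ref{YDEg}: each piece is bounded by $e^{-\lambda_A(T-k-1)}\kappa(\theta_{-T}\bx,[k,k+1])\bigl(\tfrac{\|g(0)\|}{C_g}+\|y_k\|+\ltn y,R\rtn_{\tp,[k,k+1]}\bigr)$, and the key point is that every summand of $\kappa(\theta_{-T}\bx,[k,k+1])$ carries at least one power of $C_g$, while $N_{[k,k+1]}(\theta_{-T}\bx)\leq 1+(4C_pC_g)^p\ltn\theta_{-T}\bx\rtn^p_{\tp,[k,k+1]}\to 1$ as $C_g\to 0$ for every fixed $\bx$. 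Inserting the a priori bounds \eqref{estx}--\eqref{estx2} (which express $\|y_k\|$ and $\ltn y,R\rtn_{\tp,[k,k+1]}$ polynomially in $\|a^{C_g}(\theta_{-T}\bx)\|$ and in $N$-counts), using Birkhoff \eqref{gamma} and temperedness \eqref{tempered} to balance the geometric weights $e^{-\lambda_A(T-k-1)}$ against the at-worst sub-exponential growth of $\ltn\theta_{-T+k}\bx\rtn_{\tp,[0,1]}$, and applying Lemma \ref{gronwall}, would yield a bound of the form $\|h_T\|\leq \zeta_T(C_g,\bx)\bigl(1+\|a^{C_g}(\theta_{-T}\bx)\|\bigr)$ with $\zeta_T(C_g,\bx)\to 0$ as $C_g\to 0$ for each fixed $T$ and $\bx$.

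The a.s.\ statement then follows: given $\eps>0$, first pick $T$ large enough that $\|\mu_T-\mu^*\|\leq\eps/2$, which is legitimate because $\|a^{C_g}(\theta_{-T}\bx)\|\leq \hat b^{C_g}(\theta_{-T}\bx)$ stays a.s.\ finite uniformly in small $C_g$ by Step 2 of Theorem \ref{gbounded} (there $R_r^{C_g}$ depends on $C_g$ only through the $N$-count which tends to $1$); then take $C_g$ small enough that $\zeta_T(C_g,\bx)(1+\hat b^{C_g}(\theta_{-T}\bx))<\eps/2$. For the $L^{2p}$ statement I would apply dominated convergence using $\|a^{C_g}(\bx)-\mu^*\|^{2p} \leq 2^{2p-1}\bigl(\hat b^{C_g}(\bx)^{2p}+\|\mu^*\|^{2p}\bigr)$ together with the uniform bound $\E[\hat b^{C_g}(\bx)^{2p}]=\E R_r^{C_g}=1+\E\xi_r^{C_g}/(1-\eta)$, which stays bounded as $C_g\to 0$ since $\xi_r^{C_g}$ depends on $C_g$ only through $N_{[0,r]}$-counts whose moments are uniformly controlled. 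The main technical obstacle is the long-horizon rough-integral estimate: one must verify that the semigroup decay $e^{-\lambda_A(T-k-1)}$ dominates the polynomial-in-$N$ growth coming from \eqref{estx2} uniformly for $0\leq k\leq T-1$, so that $\zeta_T(C_g,\bx)$ remains finite as $T\to\infty$ and vanishes as $C_g\to 0$ without the two limits competing.
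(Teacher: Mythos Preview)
Your approach---using invariance of the singleton to write $a^{C_g}(\bx)=y_T(\theta_{-T}\bx,a^{C_g}(\theta_{-T}\bx))$ and then splitting $a^{C_g}(\bx)-\mu^*=(\mu_T-\mu^*)+h_T$ with $h_t=y_t-\mu_t$---is precisely the strategy the paper points to: the proof there is omitted and refers to Part I, Corollary~3.12 together with the Step~1 estimates of Theorem~\ref{gbounded}, which are exactly the $h$-bounds you are reconstructing. Your choice to control the rough integral on unit blocks via Proposition~\ref{YDEg}, where $\kappa$ carries an explicit $C_g$ factor, is a clean variant of re-tracking the $C_g$-dependence through the greedy-time bound~\eqref{hest2} (the constant $\frac{1}{C_p}$ there is an artifact of the greedy scale $\gamma=\frac{1}{4C_pC_g}$; once $C_g$ is small enough that $N_{[0,r]}=1$ the true bound is $O(C_g)$, which is what your $\kappa$-route makes transparent).

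Two small repairs. For the $L^{2p}$ limit you invoke dominated convergence citing only that $\E[\hat b^{C_g}(\bx)^{2p}]=\E R_r^{C_g}$ stays bounded; a uniform moment bound is not a dominating function. The fix is immediate: $N^{C_g}_{[0,r]}$ is nondecreasing in $C_g$, hence so are $\xi_r^{C_g}$ and $R_r^{C_g}$, so $\hat b^{C_g}\leq \hat b^{\delta}$ pointwise for all $C_g\leq\delta$, and $\hat b^{\delta}\in L^{2p}$ is the required dominator (alternatively, take expectations of your one-step inequality and use stationarity of $a^{C_g}$ to get a contraction in $\E\|a^{C_g}-\mu^*\|^{2p}$ directly). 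Second, your closing worry about the limits $T\to\infty$ and $C_g\to 0$ competing is not an obstacle: you have already ordered them correctly---choose $T$ finite first, using the $C_g$-uniform tempered bound $\hat b^{\delta}(\theta_{-T}\bx)$ to make $C_Ae^{-\lambda T}\big(\hat b^{\delta}(\theta_{-T}\bx)+\|\mu^*\|\big)<\eps/2$, and then send $C_g\to 0$ for that fixed $T$.
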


\begin{proof}
	The proof follows similar arguments in \cite[Corollary 3.12]{duchong19} with similar estimates to {\bf Step 1} of Theorem \ref{gbounded}, so it will be omitted here.
\end{proof}	

\section*{Acknowledgments}
The author would like to thank Vietnam Institute for Advanced Studies in Mathematics (VIASM) for finanical support during three month research stay at the institute in 2019.



\begin{thebibliography}{1}
	\bibitem{amann}
	H. Amann.
	\newblock{\em Ordinary Differential Equations: An Introduction to Nonlinear Analysis.}
	\newblock{Walter de Gruyter, Berlin . New York}, (1990).
	%
	%
	\bibitem{arnold}
	L. Arnold.
	\newblock{\em Random Dynamical Systems.}
	\newblock{Springer, Berlin Heidelberg New York}, (1998).
	%
	\bibitem{BRSch17}
	I. Bailleul, S. Riedel, M. Scheutzow.
	\newblock{Random dynamical systems, rough paths and rough flows.}
	\newblock{\em J. Differential Equations}, Vol. {\bf 262}, (2017), 5792--5823.
	%
	\bibitem{cassetal}
	T. Cass, C. Litterer, T. Lyons.
	\newblock{Integrability and tail estimates for Gaussian rough differential equations.}
	\newblock{\em Annals of Probability}, Vol. {\bf 14}, No. 4, (2013), 3026--3050. 
	%
	%
	%
	\bibitem{crauelkloeden}
	H.~Crauel, P.~Kloeden,
	\newblock{Nonautonomous and random attractors.}
	\newblock{\em Jahresber Dtsch. Math-Ver.} {\bf 117} (2015), 173--206.
	%
	\bibitem{duc20}
	L. H. Duc.
	\newblock{Controlled differential equations as rough integrals.}
	\newblock{\em Preprint arXiv:2007.06295. }
	%
	\bibitem{duchong19}
	L. H. Duc, P. T. Hong.
	\newblock{Asymptotic stability of controlled differential equations. Part I: Young integrals.}
	\newblock{\em Preprint https://arxiv.org/abs/1905.04945.} 
	%
	\bibitem{ducGANSch18}
	L. H. Duc, M. J. Garrido-Atienza, A. Neuenkirch, B. Schmalfu\ss.
	\newblock{Exponential stability of stochastic evolution equations driven by small fractional Brownian motion with Hurst parameter in $(\frac{1}{2},1)$}.
	\newblock{\em J. Differential Equations}, 264 (2018), 1119-1145.
	%
	%
	\bibitem{frizhairer}
		P. Friz, M. Hairer.
		\newblock{\em A course on rough path with an introduction to regularity structure.}
		\newblock{Universitext}, Vol. {\bf XIV}, Springer, Berlin, 2014.
	%
	\bibitem{floris}
	C. Floris.
	\newblock{Stochastic stability of the inverted pendulum subjected to delta- correlated base excitation.}
	\newblock{\em Advances in Engineering Software}, {\bf 120}, (2018), 4--13.	
	%
	\bibitem{friz}
	P. Friz, N. Victoir.
	\newblock {\em Multidimensional stochastic processes as rough paths: theory and applications.}
	\newblock {Cambridge Studies in Advanced Mathematics, 120. Cambridge Unversity Press, Cambridge}, (2010).
	%
	%
	%
	%
	%
	%
	\bibitem{gubinelli}
	M. Gubinelli.
	\newblock {Controlling rough paths.}
	\newblock  {\em J. Funtional Analysis}, {\bf 216} (1), (2004), 86--140.
	%
	\bibitem{hairer03}
	M. Hairer.
	\newblock{Ergodicity  of  stochastic  differential  equations  driven  by  fractional  Brownian  motion.}
	\newblock{\em The Annals of Probability}, Vol. {\bf 33}, (2005), 703--758.
	%
	\bibitem{hairer07}
	M. Hairer, A. Ohashi.
	\newblock{Ergodic  theory  for  sdes  with  extrinsic  memory.}
	\newblock{\em The Annals of Probability}, Vol. {\bf 35}, (2007), 1950--1977.
	%
	\bibitem{hairer11}
	M. Hairer, N. Pillai.
	\newblock{Ergodicity  of  hypoelliptic  sdes  driven  by  fractional  Brownian  motion.}
	\newblock{\em Ann. Inst. Henri Poincar\'e}, Vol. {\bf 47}, (2011), 601--628
	%
	\bibitem{hairer13}
	M. Hairer, N. Pillai.
	\newblock{Regularity of laws and ergodicity of hypoelliptic stochastic differential equations driven by rough paths.}
	\newblock{\em The Annals of Probability}, Vol. {\bf 41}, (2013), 2544--2598.
	%
	\bibitem{khasminskii}  
	R. Khasminskii.
	\newblock{\em Stochastic stability of differential equations.}
	\newblock{Springer, Vol. 66}, 2011.
	%
	%
	\bibitem{ImkSchm01}
	P. Imkeller, B. Schmalfuss.
	\newblock{The conjugacy of stochastic and random differential equations and the
		existence of global attractors.}
	\newblock {\em J. Dyn. Diff. Equat.} {\bf 13}, No. 2, (2001), 215--249.
	%
	\bibitem{KelSchm98}
	H. Keller, B. Schmalfuss.
	\newblock{Attractors for stochastic differential equations with nontrivial
		noise.}
	\newblock {\em Bul. Acad. \c Stiin\c te Repub. Mold. Mat.} {\bf 26}, No. 1, (1998), 43--54.
	%
	\bibitem{lyons94}
	T. Lyons.
	\newblock{Differential equations driven by rough signals. I. An extension of an inequality of L.C. Young.}
	\newblock{\em Math. Res. Lett.}, {\bf 1}, No. 4, (1994), 451--464.
	%
	\bibitem{lyons98}
	T. Lyons.
	\newblock{Differential equations driven by rough signals.}
	\newblock{\em Rev. Mat. Iberoam.}, Vol. {\bf 14} (2), (1998), 215--310.
	%
	\bibitem{lyonsetal07}
	T. Lyons, M. Caruana, Th. L\'evy.
	\newblock{\em Differential equations driven by rough paths.}
	\newblock{Lecture Notes in Mathematics}, Vol. {\bf 1908}, Springer, Berlin 2007.
	%
	%
	\bibitem{mandelbrot}
	B. Mandelbrot, J. van Ness.
	\newblock{Fractional Brownian motion, fractional noises and applications.}
	\newblock{\em SIAM Review}, {\bf 4}, No. 10, (1968), 422--437.
	%
	%
	%
	%
	%
	\bibitem{riedelScheutzow}
	S. Riedel, M. Scheutzow.
	\newblock{\em Rough differential equations with unbounded drift terms.}
	\newblock{J. Differential Equations}, Vol. {\bf 262}, (2017), 283--312.
	%
	\bibitem{Sus78}
	H. J. Sussmann.
	\newblock {On the gap between deterministic and stochastic ordinary differential
		equations.}
	\newblock {\em The Annals of Probability.} {\bf 6}, No. 1, (1978), 19--41.
	%
	\bibitem{young}
	L.C. Young.
	\newblock{An integration of H{\"o}lder type, connected with Stieltjes integration.}
	\newblock{\em Acta Math.} {\bf 67}, (1936), 251--282.
	%
	%
\end{thebibliography}
\end{document}